\documentclass[12pt]{article}

\usepackage{graphicx}
\usepackage{wrapfig}
\usepackage{amssymb,amsfonts,amsmath,amsthm,amscd}
\usepackage{color}
\usepackage[english]{babel}
\usepackage[latin1]{inputenc}
\usepackage{times}
\usepackage{geometry}
\usepackage{newtxtext,newtxmath}
\usepackage{pgf,tikz,circuitikz}\usetikzlibrary{arrows}
\usepackage{mathrsfs}

\geometry{a4paper,top=2cm,bottom=2cm,left=2cm,right=2cm}

\newtheorem{theorem}{Theorem}[section]
\newtheorem{proposition}[theorem]{Proposition}
\newtheorem{lemma}[theorem]{Lemma}

\newtheorem{corollary}[theorem]{Corollary}
\newtheorem*{theorem*}{Theorem}

\theoremstyle{remark}

\newtheorem*{remark*}{Remark}

\theoremstyle{definition}

\newcommand{\ploop}{P_{\!\!\circ}}

\begin{document}

\title{A generalization of Cardy's and Schramm's formulae}

\author{Mikhail Khristoforov, Mikhail Skopenkov, Stanislav Smirnov}

\date{}

\maketitle

\begin{abstract} We study critical site percolation on the triangular lattice. We find the difference of the probabilities of having a percolation interface to the right and to the left of two given points in the scaling limit. This generalizes both Cardy's and Schramm's formulae. The generalization involves a new interesting discrete analytic observable and an unexpected conformal mapping.

\smallskip

\noindent{\bf Keywords}: percolation, O(1) model, hypergeometric function, crossing probability

\noindent{\bf 2020 MSC}: 60K35, 30C30, 33C05, 81T40, 82B43
\end{abstract}

\begin{figure}[!h]
\begin{center}
  \includegraphics[width=0.22\textwidth]{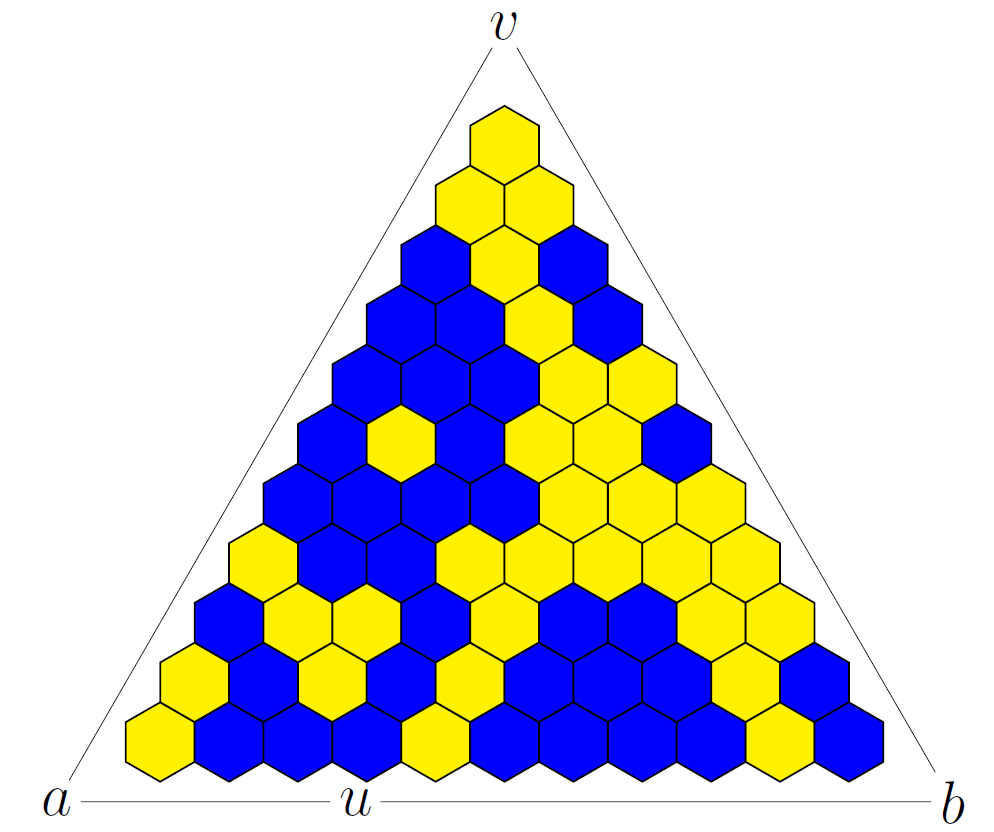}
\end{center}
  \caption{The percolation model; see Corollary~\ref{cor-Cardy}}
  \label{fig-simulation}
\end{figure}

\section{Introduction}

Percolation is an archetypical model of phase transition, used to describe many natural phenomena, from the spread of epidemics to a liquid seeping through a porous medium. It was introduced by Broadbent and Hammersley \cite{Broadbent-Hammersley-57} but appeared even earlier in the problem section of the American Mathematical Monthly \cite{Wood-94}.

In the simplest setup of \emph{Bernoulli percolation}, vertices (\emph{sites}) or edges (\emph{bonds}) of a graph are independently declared open or closed with probabilities $p$ and $1-p$ correspondingly; the resulting model is called \emph{site} or \emph{bond percolation} correspondingly. Connected clusters of open sites (or bonds) are then studied. For example, one can ask how the probability of having an open cluster connecting two sets depends on $p$. Despite simple formulation, the model approximates physical phenomena quite well and exhibits a very complicated behavior.

There is an extensive theory, see e.g.~\cite{Grimmett-99}, but still there are many open questions. For instance, it is not known whether probability $\theta(p)$ of having an infinite open cluster containing origin in $\mathbb{Z}^3$ depends continuously on $p$.

In contrast, planar models are fairly well understood. We will study critical site percolation on the triangular lattice, which by duality can be represented as a random coloring of faces (plaquettes) on the hexagonal lattice; see Figure~\ref{fig-simulation}, where open and closed hexagons are represented by blue and yellow colors. For this model, it is known that the critical value $p_c$ is equal to $1/2$, which was first proved by Kesten using duality; see~\cite{Grimmett-99}.
This means in particular that when one takes a topological rectangle (i.e. a domain with four marked boundary points) and superimposes a mesh $\delta$ lattice, the crossing probability (i.e. the probability of the existence of an open crossing between the chosen opposite sides) tends to $0$ when $p<p_c$ and to $1$ when $p>p_c$ as $
\delta\searrow 0$. The same argument shows that for $p=p_c$ the crossing probability is nontrivial, and in 1992 physicist J.~Cardy suggested an exact formula for its scaling limit as $\delta\searrow 0$ as a hypergeometric function of the conformal modulus (see Corollary~\ref{cor-Cardy}). The formula is expected to hold for any critical percolation, but so far it was proved only for the model under consideration by the third author \cite{Smirnov-01, Smirnov-09, KS-20} by establishing discrete holomorphicity of certain observables.

Henceforth one can connect \cite{Smirnov-03} scaling limits of percolation interfaces between open and closed clusters to Schramm's SLE(6) curve \cite{Schramm-00, Schramm-01} and deduce many properties, e.g. the values of critical exponents and dimensions \cite{Smirnov-Werner-01}.

This leaves open the question of how far one can go with purely discrete techniques. For example, in the critical Ising model much can be learned from similar observables without alluding to SLE \cite{Hongler-Smirnov-13, Chelkak-Hongler-Izyurov-15}, as is the case for the Uniform Spanning Tree \cite[\S11.2]{Kenyon-11}. Cf.~an elementary example in~\cite{Novikov-19}.

In this paper, we introduce a new percolation observable, which allows us to prove new results and give new proofs to old ones, which beforehand required an SLE limit (see Corollary~\ref{cor-Schramm}). In particular, we find the difference of the probabilities of having a percolation interface to the right and to the left of \emph{two} given points in the scaling limit (see Figure~\ref{fig1} and Theorem~\ref{th-main}).

Remarkably, the observable we introduce here is nothing more than a natural generalization
of the observable from \cite{KS-20} which itself is just equal to the observable from \cite{Smirnov-01}.
However, it seems that getting to the new observable directly from \cite{Smirnov-01} was not as straightforward (and so took a while).
It is worth noting that our construction admits further generalizations, \emph{ at least} to the problem with six \emph {disorders} instead of four \cite{KK-22} and that some appearing functions bear resemblance to modular forms
discussed by P.~Kleban and D.~Zagier \cite{Kleban-Zagier-03}.

\textbf{Organization of the paper.}
In \S\ref{sec-statements} and \S\ref{sec-preliminaries} we state main and auxiliary results respectively. In \S\ref{sec-proofs} and Appendix~\ref{sec-technical} we prove conceptual and technical ones respectively.

\begin{figure}[htbp]
\begin{center}
\begin{tabular}{cc}
\includegraphics[width=3.8cm]{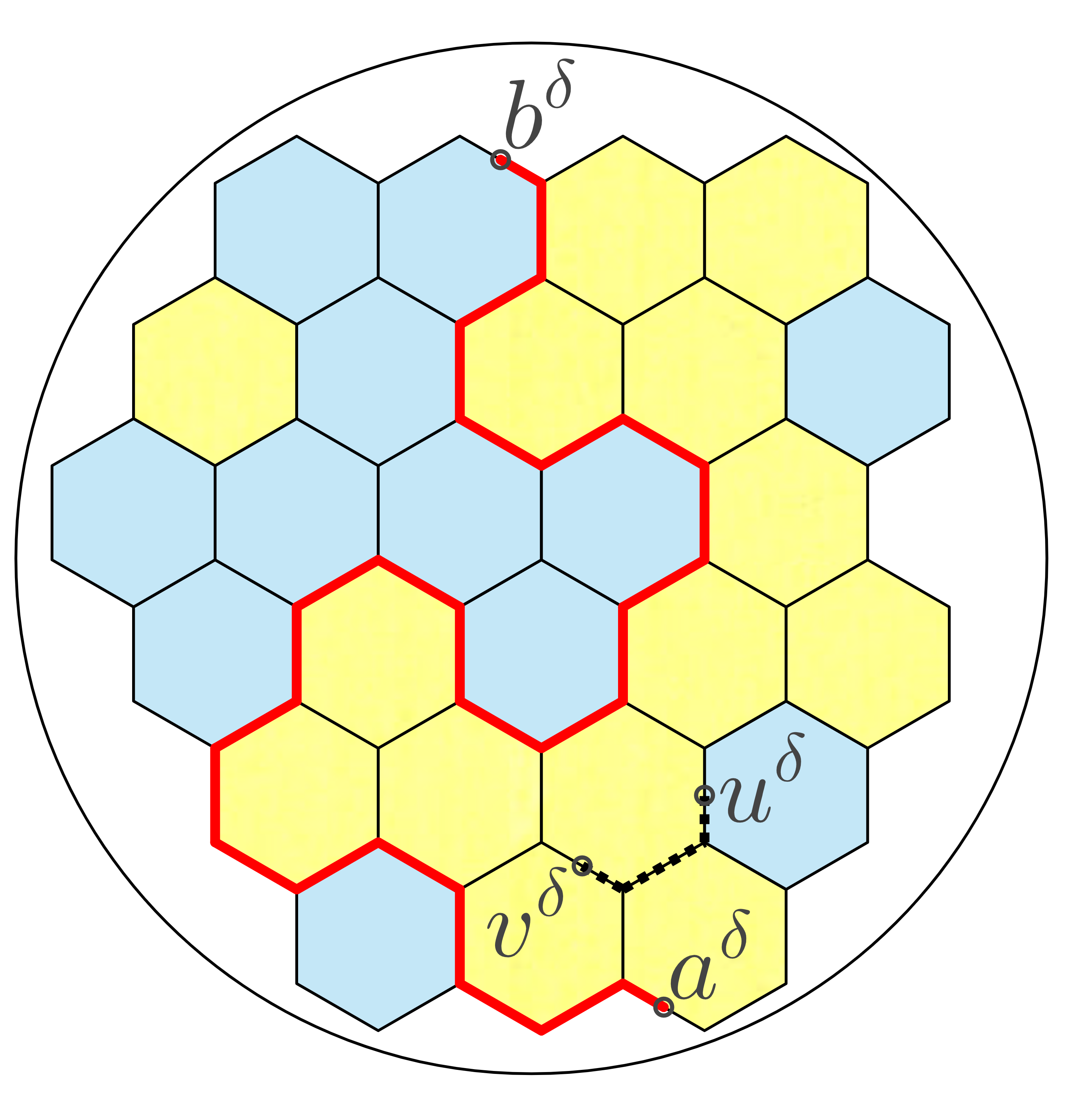} &
\includegraphics[width=3.8cm]{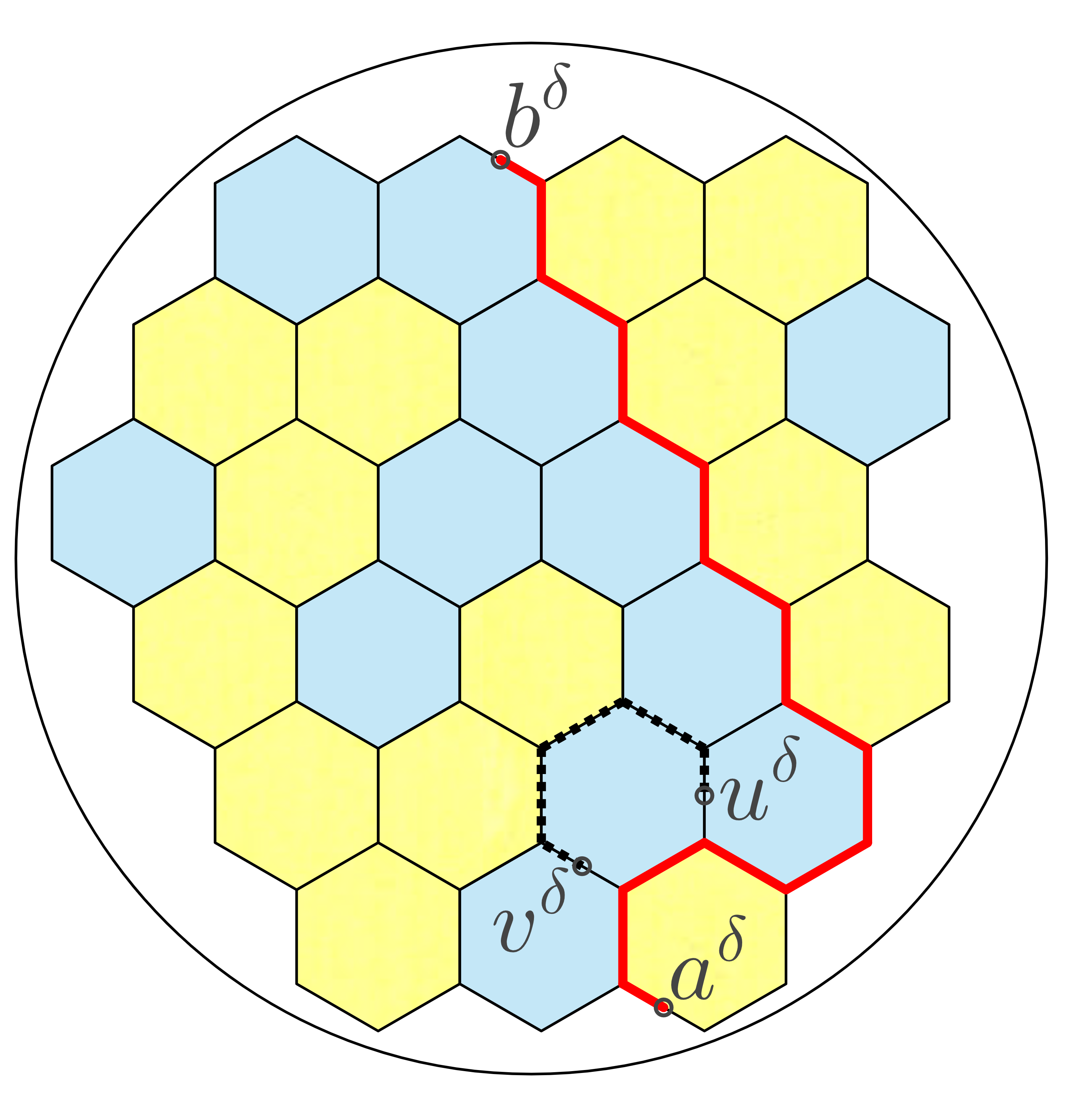}
\\
$P(a^\delta \rightarrow b^\delta,u^\delta \cdots v^\delta)$  &
$P(u^\delta\cdots v^\delta,a^\delta \rightarrow b^\delta)$
\end{tabular}
\end{center}
\caption{Colorings with the interfaces $a^\delta b^\delta$ passing to the left and to the right from given points $u^\delta,v^\delta$,
and the probabilities those colorings contribute to.
The dashed paths demonstrate that $u^\delta$ and $v^\delta$ are in the same connected component of the union of black sides, which is one of our requirements.}
\label{fig1}
\end{figure}

\section{Statement} \label{sec-statements}

Let us introduce a few definitions to state our result precisely  (see Figure~\ref{fig1}).

Consider a hexagonal lattice on the complex plane $\mathbb C$ formed by hexagons with side $\delta$. Let $\Omega\subset \mathbb C$ be a domain bounded by a closed smooth curve (that is, $\partial\Omega$ is the image of a periodic $C^1$ map $\mathbb{R}\to\mathbb{C}$ with nonzero derivative).
A \emph{lattice approximation} of the domain $\Omega$ is the maximal-area connected component  $\Omega^\delta$ of the union of all the hexagons lying inside $\Omega$ (if such a connected component is not unique, then we choose any of them). Mark two distinct boundary points $a,b\in \partial \Omega$ and two other points $u,v\in \overline{\Omega}:=\Omega \cup \partial\Omega$. Their \emph{lattice approximations} are the midpoints $a^\delta,b^\delta,u^\delta,v^\delta$ of sides of the hexagons of $\Omega^\delta$, closest to $a,b,u,v$ respectively (if the closest midpoint is not unique, then we choose any of them). Clearly, $a^\delta,b^\delta\in \partial\Omega^\delta$.
We allow $u^\delta=v^\delta$ but disallow the coincidence of any other pair among $a^\delta,b^\delta,u^\delta,v^\delta$.

The \emph{percolation model} on $\Omega^\delta$ is the uniform measure on the set of all the colorings of hexagons of $\Omega^\delta$ in two colors, say, blue and yellow.
Introduce the \emph{Dobrushin boundary condition}: in addition, paint the hexagons \emph{outside} $\Omega^\delta$ bordering upon the clockwise arc of $\partial\Omega^\delta$ between $a^\delta$ and $b^\delta$ blue, and the ones bordering upon the counterclockwise arc yellow (the ones bordering upon $a^\delta$ and $b^\delta$ are paint both colors). For the whole coloring, the \emph{interface} $a^\delta b^\delta$ is the oriented simple broken line going from $a^\delta$ to $b^\delta$ along the sides of the hexagons of $\Omega^\delta$ such that all the hexagons bordering upon $a^\delta b^\delta$ from the left are blue and all the hexagons bordering upon $a^\delta b^\delta$ from the right are yellow.

The interface $a^\delta b^\delta$ splits the union of the sides of the hexagons of $\Omega^\delta$ into connected components. Let $P(u^\delta\cdots v^\delta,a^\delta \rightarrow b^\delta)$ (respectively, $P(a^\delta \rightarrow b^\delta,u^\delta \cdots v^\delta)$) be the probability that $u^\delta$ and $v^\delta$ belong to the same component lying to the left (respectively, to the right) from $a^\delta b^\delta$.

Our main result expresses those probabilities in terms of certain conformal mappings. Let $\psi$ be a conformal mapping of $\Omega$ onto the upper half-plane $\{z:\mathrm{Im}\,z>0\}$ (continuously extended to $\overline{\Omega}$) taking $a$ and $b$ to $0$ and $\infty$ respectively. Let $g$ be the conformal mapping of $\mathbb{C}-(-\infty;-1]\cup[1;+\infty)$ onto the interior of the rhombus with the vertices $\pm 1,\pm i\sqrt{3}$,  continuously extended to the points $\pm 1$ and having fixed points $0,+1,-1$. See Figure~\ref{fig:g}.

\begin{figure}[htbp]
  \centering
      \includegraphics[height=3.6cm]{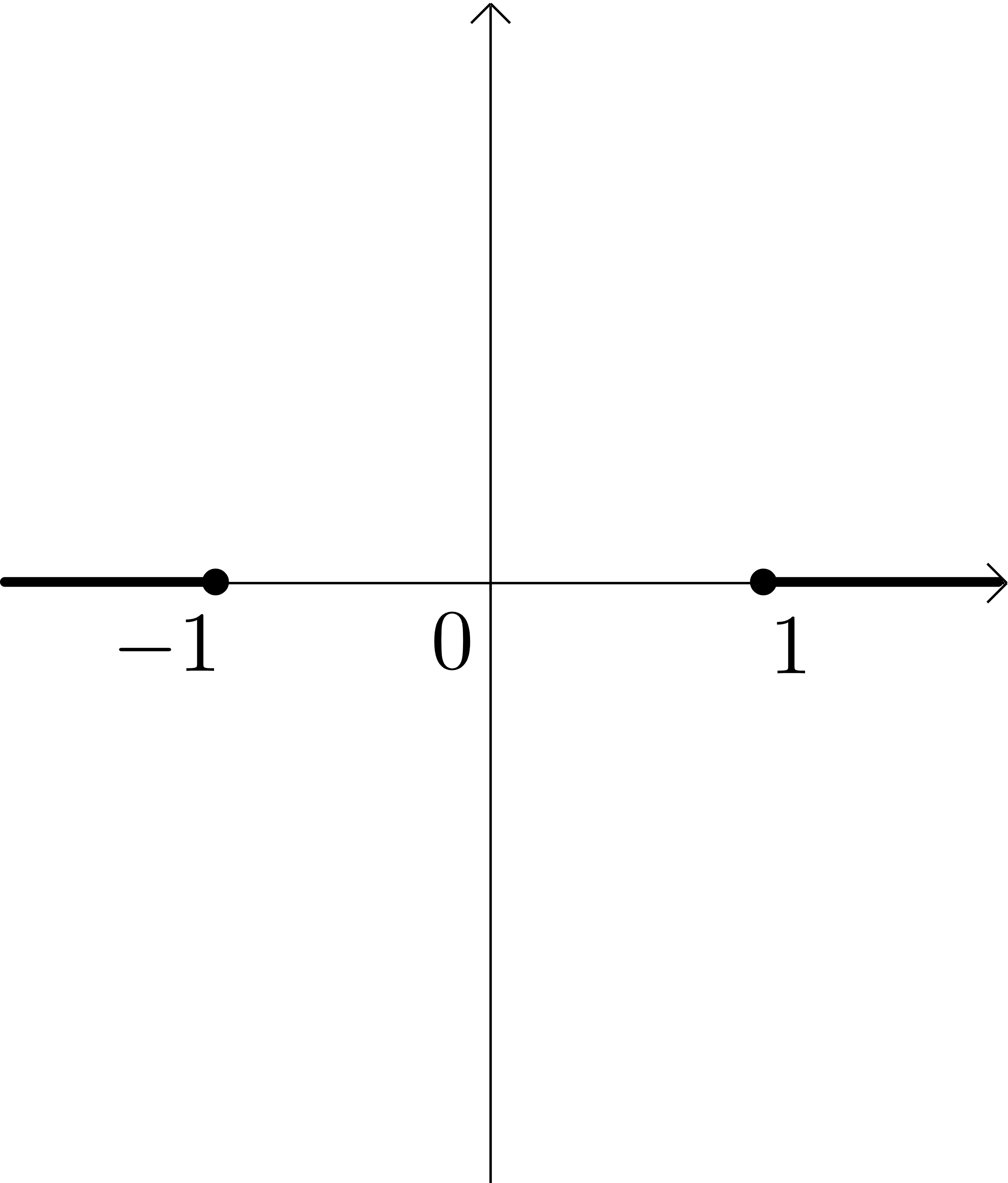}
      \includegraphics[height=3.6cm]{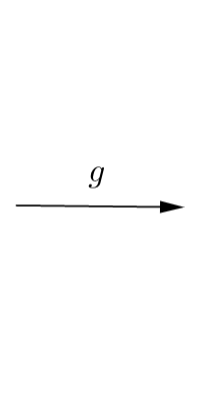}
      \includegraphics[height=3.6cm]{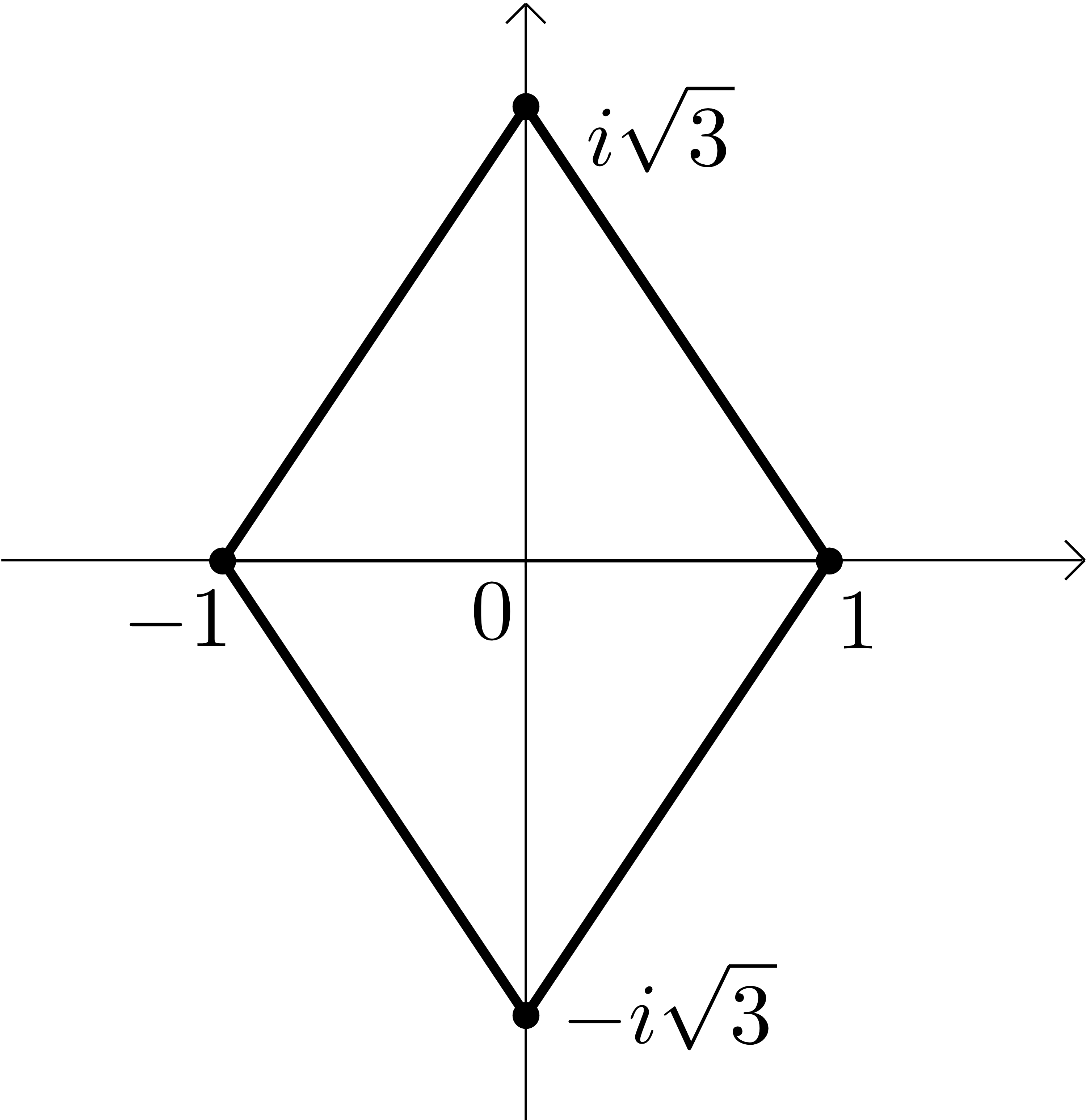}
  \caption{The conformal mapping $g(z)$ of the plane pierced by two slits onto a lozenge}\label{fig:g}
\end{figure}

\begin{theorem}\label{th-main} Let $(\Omega^\delta,a^\delta,b^\delta,u^\delta,v^\delta)$ be a lattice approximation of a domain $(\Omega,a,b,u,v)$ with two marked distinct boundary points $a,b\in\partial \Omega$ and two other points $u,v\in \overline{\Omega}$ such that $\partial \Omega$ is a closed smooth curve. Then
\begin{equation}\label{eq-th-main}
P(u^\delta \cdots v^\delta,a^\delta \rightarrow b^\delta)-
P(a^\delta \rightarrow b^\delta,u^\delta\cdots v^\delta)\to
\frac{1}{\sqrt{3}}\mathrm{Im}\,g\left(
\frac{\psi(u)+\overline{\psi(v)}}{\psi(u)-\overline{\psi(v)}}\right)
\qquad\text{as }\delta\searrow 0.
\end{equation}
\end{theorem}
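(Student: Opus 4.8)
The plan is to follow the paradigm established by the third author for Cardy's formula: exhibit a discrete holomorphic observable, determine its boundary behaviour, pass to the scaling limit, and solve the resulting continuous Riemann--Hilbert problem. First I would attach to the percolation model on $\Omega^\delta$ with the Dobrushin condition — so that the interfaces between blue and yellow hexagons form the curve $a^\delta b^\delta$ together with a family of loops — a complex-valued function $F^\delta$ of a midpoint $z$ of a side of a hexagon of $\Omega^\delta$. In the spirit of the parafermionic observable of the $O(1)$ model, and as the natural generalization of the observable of \cite{KS-20} (which coincides with the one of \cite{Smirnov-01}), $F^\delta(z)$ is an expectation over the colorings of an indicator of the relevant connectivity pattern of $u^\delta$, $v^\delta$ and $z$ among the components into which $a^\delta b^\delta$ cuts the union of the sides of the hexagons, weighted by a power of $\tau=e^{2\pi i/3}$ recording the winding of the curve through $z$. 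The weights are to be chosen so that, simultaneously, the difference $P(u^\delta\cdots v^\delta,a^\delta\to b^\delta)-P(a^\delta\to b^\delta,u^\delta\cdots v^\delta)$ is recovered from $F^\delta$ near $u^\delta$ and $v^\delta$, and the cancellations of the next step occur.

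Next I would prove that $F^\delta$ satisfies discrete Cauchy--Riemann relations on $\Omega^\delta$ away from $u^\delta$ and $v^\delta$: the signed sum of its values over the three sides incident to each vertex of the lattice vanishes. This is the combinatorial heart — one groups the colorings that differ only by a recoloring of the hexagons around the chosen vertex, uses $p_c=1/2$ to make them equiprobable, and checks that the connectivity patterns together with the weights cancel thanks to $1+\tau+\tau^2=0$; the four special points $a^\delta,b^\delta,u^\delta,v^\delta$ play the role of the four \emph{disorders} of the construction, the interior ones being the places where the relation fails in a controlled way, so that $F^\delta$ is discrete holomorphic with two prescribed ``sources''. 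One then reads off the boundary behaviour: the Dobrushin condition degenerates the connectivity events on the blue and on the yellow arc, yielding a Riemann--Hilbert condition, namely that up to lower order $F^\delta$ is tangent to $\partial\Omega^\delta$ in a rotated sense, with the rotation constant on each arc and jumping at $a^\delta$ and $b^\delta$.

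Then I would establish precompactness of $\{F^\delta\}$ away from $u,v$ — discrete holomorphicity gives a priori regularity, and together with a maximum principle and the standard RSW crossing bounds it yields uniform estimates — so that any subsequential limit is holomorphic on $\Omega\setminus\{u,v\}$ with at worst simple poles of prescribed residues at $u,v$ and with the stated boundary values, and this continuous boundary value problem has a unique solution; hence $F^\delta$ converges. Transporting the problem to the upper half-plane by $\psi$, the boundary conditions become the requirement that the limit be real up to a fixed rotation on $(-\infty,0)$ versus $(0,+\infty)$, while the two sources sit at $\psi(u)$ and, after Schwarz reflection across $\mathbb R$, at $\overline{\psi(v)}$. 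Writing down and normalizing the solution and recalling how the target probability difference is read off $F^\delta$, the answer reorganizes itself into $\frac1{\sqrt3}\,\mathrm{Im}\,g\!\left(\frac{\psi(u)+\overline{\psi(v)}}{\psi(u)-\overline{\psi(v)}}\right)$: the M\"obius combination of $\psi(u)$ and $\overline{\psi(v)}$ is the coordinate resolving both sources at once, and the map $g$ onto the rhombus with vertices $\pm1,\pm i\sqrt3$ is the uniformization of the natural domain of the limiting observable (its four sides matching the four arcs of the doubled picture). Specializing $u=v$, and letting $u,v$ approach $\partial\Omega$, must reproduce Schramm's and Cardy's formulae, which serves as a consistency check.

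I expect the main obstacle to be the first two steps: isolating the correct observable in the presence of two interior marked points and, above all, verifying its discrete holomorphicity and pinning down the exact singular behaviour it develops at $u^\delta$ and $v^\delta$ — this is the genuinely new combinatorial input. Once the limiting observable and its singularities are in hand, the emergence of the unexpected conformal map $g$, though the most striking feature of \eqref{eq-th-main}, should reduce to the careful solution of an explicit boundary value problem.
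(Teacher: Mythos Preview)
Your high-level strategy---discrete holomorphic observable, boundary behaviour, precompactness via RSW, identification of the limit through a boundary-value problem---is the paper's strategy. But your concrete picture of the observable is off in a way that would prevent the argument from closing.

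In the paper the observable is not a function of an auxiliary midpoint $z$ with $u^\delta,v^\delta$ held fixed as interior singularities. It is a function $F(u,v)$ of the \emph{pair}, with the running (discrete-holomorphic) variable equal to $u$ itself and $v$ a parameter; there are four disorders $a,b,u,v$ and no fifth point. Concretely, $F(u,v)$ is the $\{\pm1,\pm i\sqrt3\}$-linear combination of the four link-pattern probabilities in the $O(1)$ loop picture, and the target quantity is simply $\mathrm{Im}\,F(u^\delta,v^\delta)/\sqrt3$ evaluated at the actual points---not residue-type data extracted ``near'' a singularity. The scaling limit has no poles: $|F|\le 2+2\sqrt3$ everywhere, the limit is continuous on $\overline\Omega$, and the putative singularity at $u=v$ is removable. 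So neither the ``two simple poles with prescribed residues'' mechanism nor the tangential Riemann--Hilbert condition (with a jump only at $a,b$) you describe would materialize. The boundary values actually depend on the cyclic order of all four of $a,b,u,v$ on $\partial\Omega$ and place $F(u,v)$ on one of the four sides (or the diagonal) of the rhombus $\pm1,\pm i\sqrt3$.

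The structural ingredient replacing your two-source picture, and which you do not mention, is the \emph{conjugate-antisymmetry} $F(u,v)=-\overline{F(v,u)}$. This is what lets one bootstrap: for $v\in\partial\Omega$ the rhombus boundary values and analyticity in $u$ determine $F(\cdot,v)$ uniquely; conjugate-antisymmetry then transfers this to $u\in\partial\Omega$, $v\in\Omega$, giving the boundary data needed to determine $F(\cdot,v)$ for interior $v$. The M\"obius combination $\tfrac{\psi(u)+\overline{\psi(v)}}{\psi(u)-\overline{\psi(v)}}$ and the rhombus map $g$ then arise as the unique conjugate-antisymmetric, one-variable-holomorphic function matching those boundary values---not from reflecting a pole across $\mathbb{R}$.
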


Here if both $u$ and $v$ belong to the counterclockwise (respectively, clockwise) boundary arc $ab$ of $\partial \Omega$, then the value of the mapping $g$ in~\eqref{eq-th-main} is understood as continuously extended from the lower (respectively, upper) half-plane. There is an explicit formula for the mapping $g$.

\begin{proposition}\label{l-schwarz}
We have
\begin{equation}\label{eq-l-schwarz}
g(z)=
\frac {6\,\Gamma(2/3)}{\Gamma(1/3)^2}
\left(\frac {z+1}2 \right)^{1/3}\cdot {}_2F_1 \left(\frac 13, \frac 23; \frac 43; \frac {z+1}2  \right)-1
=\frac{2\sqrt{3}\,\Gamma(2/3)}{\sqrt{\pi}\,\Gamma(1/6)}
\cdot z\cdot
{}_2F_{1}\left(\frac{1}{2},\frac{2}{3};\frac{3}{2};
z^2\right).
\end{equation}
\end{proposition}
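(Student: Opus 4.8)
The plan is to reverse-engineer $g$ from its boundary behaviour, exhibit it as a Schwarz--Christoffel-type integral, and then recognise that integral as the two hypergeometric expressions in~\eqref{eq-l-schwarz}. The starting point is a pair of symmetries of the slit domain $D:=\mathbb C-((-\infty,-1]\cup[1,\infty))$. The involution $z\mapsto -z$ preserves $D$ and the rhombus, fixes $0$, and interchanges $1$ with $-1$, so the map $z\mapsto -g(-z)$ is a conformal map $D\to(\text{rhombus})$ with the same normalisation as $g$; by uniqueness $g(-z)=-g(z)$. Likewise Schwarz reflection across $\mathbb R$ gives $g(\bar z)=\overline{g(z)}$. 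In particular $g(\pm1)=\pm1$. Finally, in the coordinate $1/z$ the domain $D$ becomes $\widehat{\mathbb C}\setminus[-1,1]$ and the point $z=\infty$ is an interior point of that slit, hence carries two prime ends of $D$, swapped by $z\mapsto -z$; these are the preimages of the two remaining rhombus vertices $\pm i\sqrt3$.

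Next I would compute the pre-Schwarzian $h:=g''/g'$. Across each of the four boundary arcs of $D$ (sub-arcs of $\mathbb R$) the map $g$ sends a line segment into a line, so $g$ continues across it by reflection and $h$, being invariant under post-composition with affine maps, extends to a single-valued meromorphic function on $\widehat{\mathbb C}$ with singularities only at the four prevertices. Matching interior angles ($2\pi$ for $D$ against $2\pi/3$ for the rhombus at $z=\pm1$; $\pi$ against $\pi/3$ at $z=\infty$, read off in the coordinate $1/z$) shows $g(z)\mp1\sim c(z\mp1)^{1/3}$ near $z=\pm1$, so $h$ has a simple pole there with residue $-2/3$, while a short local computation gives $h(z)=-\tfrac43 z^{-1}+O(z^{-2})$ near $\infty$. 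The only rational function consistent with this is $h(z)=-\tfrac23\bigl((z-1)^{-1}+(z+1)^{-1}\bigr)$, and integrating $(\log g')'=h$ yields $g'(z)=C(1-z^2)^{-2/3}$, hence, using $g(0)=0$,
\[
g(z)=C\int_0^z(1-t^2)^{-2/3}\,dt,
\]
the path and branch being taken in $\overline D$. (An equivalent route precomposes with the explicit map $z\mapsto\sqrt{(z-1)/(z+1)}$ of $D$ onto the upper half-plane, which sends $1,\infty^{\pm},-1$ to $0,\pm1,\infty$, and then applies the classical Schwarz--Christoffel formula.)

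It then remains to pin down $C$ and to rewrite the integral. The Euler Beta integral gives $g(1)=\tfrac C2 B(\tfrac12,\tfrac13)$, so the normalisation $g(1)=1$ forces $C=2\Gamma(5/6)/(\sqrt\pi\,\Gamma(1/3))$, which by the reflection formula together with the Legendre duplication formula equals $2\sqrt3\,\Gamma(2/3)/(\sqrt\pi\,\Gamma(1/6))$. The expansion $\int_0^z(1-t^2)^{-b}\,dt=z\,{}_2F_1(\tfrac12,b;\tfrac32;z^2)$ with $b=\tfrac23$ now produces the second expression in~\eqref{eq-l-schwarz}; alternatively, starting from $g(z)=-1+C\int_{-1}^z(1-t^2)^{-2/3}\,dt$ and substituting $t=2s-1$ turns the integral into $3\cdot2^{-1/3}(\tfrac{z+1}2)^{1/3}\,{}_2F_1(\tfrac13,\tfrac23;\tfrac43;\tfrac{z+1}2)$, with $3\cdot2^{-1/3}C=6\,\Gamma(2/3)/\Gamma(1/3)^2$ once more by duplication; this gives the first expression.

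I expect the genuine work to lie in justifying the reflection step, i.e.\ in checking that $g$ really maps $\partial D$ homeomorphically onto $\partial(\text{rhombus})$: on $(-1,1)$ the integral is real and strictly increasing onto $(-1,1)$, whereas along the upper side of the slit $[1,\infty)$ the factor $(1-t^2)^{-2/3}$ has constant argument $2\pi/3$, so $g$ traces the straight segment issuing from $1$ in direction $e^{2\pi i/3}$, and evaluating $\int_1^\infty(t^2-1)^{-2/3}\,dt$ as a Beta integral shows this segment terminates exactly at $i\sqrt3=g(\infty^+)$; the remaining three sides then follow from the two symmetries. The main obstacles, as I see them, are precisely this bookkeeping---tracking the branch of $(1-t^2)^{-2/3}$ along the slits and verifying that the four image segments assemble into the rhombus without overlap---together with the justification that $h$ is in fact regular (not merely meromorphic) at $z=\infty$; granting those, the rest is routine manipulation of Gamma functions.
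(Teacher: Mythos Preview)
Your argument is correct but follows a genuinely different route from the paper's.  The paper does not derive $g$ at all: it simply quotes from Nehari that
$g_0(z)=\frac{3\Gamma(2/3)}{\Gamma(1/3)^2}\,z^{1/3}\,{}_2F_1(\tfrac13,\tfrac23;\tfrac43;z)$
is the Schwarz triangle map taking the upper half-plane to the equilateral triangle $0,1,\tfrac{1+i\sqrt3}{2}$, observes that $2g_0\!\bigl(\tfrac{z+1}{2}\bigr)-1$ then sends the upper half-plane to the triangle $-1,1,i\sqrt3$ (fixing $0$ by symmetry), and invokes Schwarz reflection to obtain the map of the doubly-slit plane onto the full rhombus; the second hypergeometric expression is handled by the same trick, starting from the Schwarz map onto the right triangle $0,1,i\sqrt3$ and precomposing with $z\mapsto z^2$.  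By contrast you reconstruct $g$ from scratch via the pre-Schwarzian $g''/g'$, read off its residues from the vertex angles, integrate to the Schwarz--Christoffel form $g(z)=C\!\int_0^z(1-t^2)^{-2/3}\,dt$, and only then identify the integral with the two hypergeometric series via Euler-type manipulations and Gamma identities.  Your approach is more self-contained (nothing about Schwarz triangle functions needs to be looked up) and makes the equality of the two hypergeometric expressions transparent---both are just the same definite integral; the paper's approach is much shorter because it outsources all the analysis to the reference, but it leaves the identity between the two expressions in~\eqref{eq-l-schwarz} as an implicit consequence of two separate triangle maps yielding the same rhombus map.  The ``genuine work'' you flag (boundary correspondence along the slits, regularity of $g''/g'$ at $\infty$) is precisely what the citation to Nehari absorbs.
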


Hereafter
$$
{}_2F_{1}(p,q;r;z):=\frac {\Gamma(r)}{\Gamma(q)\Gamma(r-q)}
\int_{0}^{1}t^{q-1}(1-t)^{r-q-1}(1-tz)^{-p}\,dt
$$
denotes the \emph{principal branch of the hypergeometric function} in $\mathbb{C}-[1,+\infty)$; \cite[Ch.~V, \S7]{Nehari-75}.

\begin{remark*} The real part of $g(z)$ has a probabilistic meaning as well; see Theorem~\ref{prop-convergence} below.
\end{remark*}

\begin{remark*} The smoothness of $\partial\Omega$ is not really a restriction. Using the methods of \cite{KS-20}, one can generalize this result to a Jordan domain~$\Omega$ (and even an arbitrary bounded simply-connected domain~$\Omega$, if $\partial \Omega$ is understood as the set of prime ends of $\Omega$ and $(\Omega^\delta, a^\delta,b^\delta,u^\delta,v^\delta)$ converges to $(\Omega,a,b,u,v)$ in the Caratheodory sense). However, the message of the paper can be seen already in the simplest particular case $\Omega=\mathrm{Int}\, D^2:=\{z\in\mathbb{C}:|z|<1\}$.
\end{remark*}

\begin{remark*}
For $u,v\in \partial \Omega$ the theorem gives Cardy's formula for the crossing probability, and for $u=v$ it gives Schramm's formula for the surrounding probability.
The \emph{crossing probability} $P(a^\delta u^\delta\leftrightarrow b^\delta v^\delta)$ is the probability that some connected component of the union of blue hexagons of $\Omega^\delta$ has common points with both arcs $a^\delta u^\delta$ and $b^\delta v^\delta$ of $\partial\Omega^\delta$ (which means just a blue crossing between the arcs). The \emph{surrounding} (or \emph{right-passage}) \emph{probability} $P(v,a^\delta \rightarrow b^\delta)$ is the probability that $v$ belongs to a connected component of the complement $\Omega^\delta-a^\delta b^\delta$ bordering upon the interface $a^\delta b^\delta$ from the left (which means surrounding of $v$ by the interface $a^\delta b^\delta$ and the clockwise boundary arc $b^\delta a^\delta\subset \partial\Omega^\delta$).
\end{remark*}

\begin{corollary}[Cardy's formula] \label{cor-Cardy}
\textup{\cite[Eq.~(8)]{Cardy-92}}
Let $(\Omega^\delta,a^\delta,b^\delta,v^\delta,u^\delta)$ be a lattice approximation of a domain $(\Omega,a,b,v,u)$ bounded by a closed smooth curve with four distinct marked points lying on the boundary $\partial \Omega$ in the counterclockwise order
$a,b,v,u$. Then
\begin{equation}\label{eq-cor-Cardy}
P(a^\delta u^\delta\leftrightarrow b^\delta v^\delta)
  \to g_\Omega(u)=\frac {3\Gamma(2/3)}{\Gamma(1/3)^2} \left(\frac{\psi(u)}{\psi(v)}\right)^{1/3}\cdot
  {}_2F_1 \left(\frac 13, \frac 23; \frac 43; \frac{\psi(u)}{\psi(v)} \right)
  \qquad\text{as }\delta\searrow 0,
\end{equation}
where $g_\Omega$ is the conformal mapping of $\Omega$ onto the equilateral triangle with the vertices $0,1,(1+\sqrt{3}i)/2$ (continuously extended to $\overline\Omega$) taking $a,b,v$ to the respective vertices.
\end{corollary}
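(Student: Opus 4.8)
\emph{Overall plan.} I would deduce the corollary from Theorem~\ref{th-main} and Proposition~\ref{l-schwarz} in two stages: first identify the crossing event with one of the interface--separation events appearing in \eqref{eq-th-main}, then evaluate the right-hand side of \eqref{eq-th-main} when $u,v$ lie on the boundary. Since the counterclockwise order of the marked points is $a,b,v,u$, both $u$ and $v$ lie on the clockwise boundary arc $ab$, in the clockwise order $a,u,v,b$, and the interface $a^\delta b^\delta$ has the blue exterior hexagons of this arc on its left. For $\delta$ small, $u^\delta$ is the midpoint of a boundary edge whose exterior hexagon is blue, hence lies to the left of $a^\delta b^\delta$, so $u^\delta$ (and likewise $v^\delta$) can never lie in a component of the $1$-skeleton minus the interface that lies to the right of $a^\delta b^\delta$; thus $P(a^\delta\to b^\delta,u^\delta\cdots v^\delta)=0$. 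On the other hand, by the self-duality of face percolation on the hexagonal lattice, a blue crossing between the arcs $a^\delta u^\delta=[a,u]$ and $b^\delta v^\delta=[v,b]$ exists if and only if there is no yellow crossing between the complementary arcs $[u,v]$ and $[b,a]$, and such a yellow crossing is exactly what forces $a^\delta b^\delta$ to use a boundary edge of the arc $[u,v]$, thereby pinching the region to the left of $a^\delta b^\delta$ and separating $u^\delta$ from $v^\delta$ in it. Hence, up to an event of probability tending to $0$ (a standard RSW-type estimate bounds the chance that $u^\delta$ or $v^\delta$ itself lies on the interface), $\{u^\delta\cdots v^\delta,a^\delta\to b^\delta\}=\{a^\delta u^\delta\leftrightarrow b^\delta v^\delta\}$, so $P(a^\delta u^\delta\leftrightarrow b^\delta v^\delta)=P(u^\delta\cdots v^\delta,a^\delta\to b^\delta)-P(a^\delta\to b^\delta,u^\delta\cdots v^\delta)+o(1)$, and Theorem~\ref{th-main} yields $P(a^\delta u^\delta\leftrightarrow b^\delta v^\delta)\to\frac{1}{\sqrt3}\,\mathrm{Im}\,g\!\left(\frac{\psi(u)+\overline{\psi(v)}}{\psi(u)-\overline{\psi(v)}}\right)$.

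\emph{Evaluating the limit.} As $\psi(a)=0$, $\psi(b)=\infty$ and $u,v$ lie on the clockwise arc $ab$, the numbers $\psi(u),\psi(v)$ are real with $\psi(v)<\psi(u)<0$; put $r:=\psi(u)/\psi(v)\in(0,1)$, so the argument of $g$ equals $w=\frac{r+1}{r-1}\in(-\infty,-1)$. This lies on the left slit, so by the convention stated after Theorem~\ref{th-main} (here $u,v$ are on the clockwise arc) one takes the branch of $g$ continued from the upper half-plane, which is precisely the value given by the principal branches in the first formula of Proposition~\ref{l-schwarz}. Now $\frac{w+1}{2}=\frac{r}{r-1}$ is a negative real, so the principal cube root there contributes the factor $e^{i\pi/3}$ while ${}_2F_1\!\left(\frac13,\frac23;\frac43;\frac{r}{r-1}\right)$ is a positive real; hence $g(w)+1$ has argument $\pi/3$ and
\[
\frac{1}{\sqrt3}\,\mathrm{Im}\,g(w)=\tfrac12\,|g(w)+1|=\frac{3\,\Gamma(2/3)}{\Gamma(1/3)^2}\left(\frac{r}{1-r}\right)^{1/3}{}_2F_1\!\left(\frac13,\frac23;\frac43;\frac{r}{r-1}\right).
\]
Pfaff's transformation ${}_2F_1(\alpha,\beta;\gamma;z)=(1-z)^{-\alpha}\,{}_2F_1\!\left(\alpha,\gamma-\beta;\gamma;\tfrac{z}{z-1}\right)$ with $(\alpha,\beta,\gamma)=(\tfrac13,\tfrac23,\tfrac43)$ — where $\gamma-\beta=\tfrac23=\beta$, so the function maps to itself — turns the right side into $\frac{3\,\Gamma(2/3)}{\Gamma(1/3)^2}\,r^{1/3}\,{}_2F_1\!\left(\frac13,\frac23;\frac43;r\right)$, which is the right-hand side of \eqref{eq-cor-Cardy} since $r=\psi(u)/\psi(v)$.

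\emph{Identifying the triangle map, and the hard part.} It remains to recognize this expression as $g_\Omega(u)$: for $(\alpha,\beta,\gamma)=(\tfrac13,\tfrac23,\tfrac43)$ the Schwarz angles $\pi|1-\gamma|$, $\pi|\gamma-\alpha-\beta|$, $\pi|\alpha-\beta|$ are all $\pi/3$, so $t\mapsto t^{1/3}\,{}_2F_1(\tfrac13,\tfrac23;\tfrac43;t)$ maps the upper half-plane conformally onto an equilateral triangle sending $0,1,\infty$ to the vertices, and the factor $\tfrac{3\Gamma(2/3)}{\Gamma(1/3)^2}$ (together with the value ${}_2F_1(\tfrac13,\tfrac23;\tfrac43;1)=\Gamma(1/3)^2/(3\Gamma(2/3))$) normalizes the vertices to $0,1,(1+\sqrt3\,i)/2$; precomposing with $z\mapsto z/\psi(v)$ and with $\psi$ sends $a,v,b$ to $0,1,\infty$ and hence to the vertices, so $u\mapsto\tfrac{3\Gamma(2/3)}{\Gamma(1/3)^2}(\psi(u)/\psi(v))^{1/3}{}_2F_1(\tfrac13,\tfrac23;\tfrac43;\psi(u)/\psi(v))$ is the conformal map $g_\Omega$, real-valued on the arc between $a$ and $v$ as a crossing probability must be. The main obstacle is the combinatorial identification in the first stage: rigorously matching the lattice-level notion of ``component lying to the left of the interface'' with the blue crossing event, using the self-duality of triangular-lattice percolation and the topology of the interface near the boundary. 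Once that is in place, the remaining steps — substitution into Theorem~\ref{th-main}, a single application of Pfaff's transformation, and the classical Schwarz triangle identification — are essentially formal.
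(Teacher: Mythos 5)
Your proposal is correct, and its first stage (reducing the crossing probability to the left/right--separation probabilities and invoking Theorem~\ref{th-main}) is essentially the paper's reduction: the paper simply asserts $P(a^\delta u^\delta\leftrightarrow b^\delta v^\delta)=P(u^\delta\cdots v^\delta,a^\delta\rightarrow b^\delta)$ and $P(a^\delta\rightarrow b^\delta,u^\delta\cdots v^\delta)=0$, while you derive the same identification via duality with an $o(1)$ correction for the event that $u^\delta$ or $v^\delta$ lies on the interface (a legitimate patch, of the same nature as the estimate the paper itself uses in the proof of Corollary~\ref{cor-Schramm}). Where you genuinely diverge is the second stage. The paper never touches the explicit formula~\eqref{eq-l-schwarz}: it recognizes the right-hand side of~\eqref{eq-cor-Cardy} as the Schwarz triangle function of $\eta=\psi(u)/\psi(v)$ and compares it with $\tfrac12 g\bigl(\tfrac{\eta+1}{\eta-1}\bigr)+\tfrac12$, which by definition of $g$ is a second conformal map of a half-plane onto the same equilateral triangle with two vertex images interchanged; a reflection-symmetry argument (images of $\eta\in[0,1]$ are symmetric in the bisector at $0$, so the imaginary part of one equals $\tfrac{\sqrt3}{2}$ times the other) then gives the identity with no hypergeometric manipulation. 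You instead evaluate $\tfrac1{\sqrt3}\mathrm{Im}\,g$ on the slit $(-\infty,-1)$ directly from the first formula of Proposition~\ref{l-schwarz}, tracking the branch prescribed by the convention after Theorem~\ref{th-main} (the $e^{i\pi/3}$ factor, which correctly yields a positive limit) and then applying Pfaff's transformation with $\gamma-\beta=\beta$. Both routes are valid: the paper's is formula-free and independent of Proposition~\ref{l-schwarz}, while yours is a self-contained computation that makes the branch convention explicit and serves as a sign check. One minor caveat in your last paragraph: since $\psi(v)<0$, the map $z\mapsto\psi(z)/\psi(v)$ sends $\Omega$ onto the \emph{lower} half-plane, so your composite with the principal-branch triangle function maps $\Omega$ onto the reflected triangle with vertices $0,1,(1-\sqrt3 i)/2$ rather than literally onto the triangle named in the corollary; this orientation slip (present at the same level in the paper's own phrasing of $g_\Omega$) does not affect the real-valued identity on the arc carrying $u$, which is all that the convergence statement requires.
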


\begin{corollary}[Schramm's formula] \label{cor-Schramm}
\textup{\cite[Theorem~1]{Schramm-01}}
Let $(\Omega^\delta,a^\delta,b^\delta)$ be a lattice approximation of the unit disk $(\mathrm{Int}\,D^2,a,b)$ with two distinct marked boundary points $a,b\in\partial D^2$. Then
\begin{equation}\label{eq-cor-Schramm}
  P(0,a^\delta \rightarrow b^\delta)\to
  \frac{1}{2}-\frac{\Gamma(2/3)}{\sqrt{\pi}\Gamma(1/6)}
  \cot\frac{\theta}{2}\cdot
  {}_2F_{1}\left(
  \frac{1}{2},\frac{2}{3};\frac{3}{2};-\cot^2\frac{\theta}{2}\right)
  \text{ as }\delta\searrow 0,\text{ where } e^{i\theta}:=\frac{a}{b}.
\end{equation}
\end{corollary}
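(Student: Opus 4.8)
The plan is to deduce Corollary~\ref{cor-Schramm} from Theorem~\ref{th-main} applied to $\Omega=\mathrm{Int}\,D^2$ and $u=v=0$, together with the explicit formula for $g$ in Proposition~\ref{l-schwarz}. Two things have to be done: (i) identify the conformal expression on the right-hand side of~\eqref{eq-th-main} in this case, and (ii) relate the left-hand side of~\eqref{eq-th-main} to the surrounding probability $P(0,a^\delta\to b^\delta)$.

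For (i), I would first write down $\psi$. A Möbius transformation of the disk onto the upper half-plane taking $a\mapsto 0$, $b\mapsto\infty$ has the form $\psi(z)=\mu\,(z-a)/(z-b)$; putting $a=e^{i\alpha}$, $b=e^{i\beta}$ and using $e^{it}-e^{is}=2ie^{i(t+s)/2}\sin\frac{t-s}{2}$ shows that $\psi$ carries the unit circle to $\mathbb R$ exactly when $\mu\in e^{-i(\alpha-\beta)/2}\mathbb R$, and the requirement that the interior go to the \emph{upper} half-plane fixes the sign; this gives $\psi(0)=\rho\,e^{i\theta/2}$ for some $\rho>0$, where $e^{i\theta}=a/b$ with $\theta\in(0,2\pi)$. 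Hence the argument of $g$ in~\eqref{eq-th-main} is
\[
\frac{\psi(0)+\overline{\psi(0)}}{\psi(0)-\overline{\psi(0)}}
=\frac{\mathrm{Re}\,\psi(0)}{i\,\mathrm{Im}\,\psi(0)}
=-i\cot\frac\theta2 ,
\]
the scale $\rho$ dropping out. Since $\bigl(-i\cot\tfrac\theta2\bigr)^2=-\cot^2\tfrac\theta2$ lies in $(-\infty,0]\subset\mathbb C-[1,+\infty)$, the hypergeometric factor in the second expression of~\eqref{eq-l-schwarz} is a real number, so $g(-i\cot\tfrac\theta2)=-i\,\tfrac{2\sqrt3\,\Gamma(2/3)}{\sqrt\pi\,\Gamma(1/6)}\cot\tfrac\theta2\cdot{}_2F_1(\tfrac12,\tfrac23;\tfrac32;-\cot^2\tfrac\theta2)$ is purely imaginary, and therefore
\[
\frac12+\frac1{2\sqrt3}\,\mathrm{Im}\,g\!\left(-i\cot\frac\theta2\right)
=\frac12-\frac{\Gamma(2/3)}{\sqrt\pi\,\Gamma(1/6)}\cot\frac\theta2\cdot
{}_2F_1\!\left(\frac12,\frac23;\frac32;-\cot^2\frac\theta2\right),
\]
which is precisely the right-hand side of~\eqref{eq-cor-Schramm}.

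For (ii), note that with $u=v=0$ (and $u^\delta=v^\delta=0^\delta$) the left-hand side of~\eqref{eq-th-main} is $P(0^\delta\cdots 0^\delta,a^\delta\to b^\delta)-P(a^\delta\to b^\delta,0^\delta\cdots 0^\delta)$. I would show, first, that these two probabilities sum to $1-o(1)$: the interface $a^\delta b^\delta$ is a simple arc, so deleting its edges from the $1$-skeleton of $\Omega^\delta$ leaves components of just two kinds (``left'' and ``right''), and $0^\delta$ fails to belong to one of them only when its edge lies on $a^\delta b^\delta$, an event of probability $o(1)$ by standard a priori (RSW-type) bounds for critical site percolation. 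Second, the same bounds give that $P(a^\delta b^\delta\text{ enters }B(0,\varepsilon))$ is at most some $h(\varepsilon)$, uniformly in $\delta$, with $h(\varepsilon)\to 0$; off this event and for $\delta$ small, the point $0$, its hexagon, the edge of $0^\delta$, and all of $B(0,\varepsilon)$ lie in one and the same component, so $P(0^\delta\cdots 0^\delta,a^\delta\to b^\delta)$ and $P(0,a^\delta\to b^\delta)$ have the same limit. Combining with Theorem~\ref{th-main} yields $2\,P(0,a^\delta\to b^\delta)-1\to\tfrac1{\sqrt3}\mathrm{Im}\,g(-i\cot\tfrac\theta2)$, and by (i) the limit equals the right-hand side of~\eqref{eq-cor-Schramm}.

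The computations in (i) are routine; the genuine content is in (ii), where the soft passage from the surrounding probability $P(0,a^\delta\to b^\delta)$ — defined through the continuum point $0$ — to the lattice quantity of Theorem~\ref{th-main}, together with the complementarity of the left- and right-passage events, relies on a priori estimates excluding the interface from a fixed interior point with non-vanishing probability. This is the step I expect to require (minor) care.
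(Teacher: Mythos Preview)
Your proof is correct and follows essentially the same approach as the paper: apply Theorem~\ref{th-main} with $u=v=0$, use RSW-type estimates to pass from $P(0^\delta\cdots 0^\delta,a^\delta\to b^\delta)$ to $P(0,a^\delta\to b^\delta)$ and to obtain the complementarity $P(0^\delta\cdots 0^\delta,a^\delta\to b^\delta)+P(a^\delta\to b^\delta,0^\delta\cdots 0^\delta)\to 1$, and then evaluate the conformal expression via Proposition~\ref{l-schwarz}. The only cosmetic difference is that the paper computes the argument of $g$ by invoking cross-ratio invariance of the extended M\"obius map (noting $\overline{\psi(0)}=\psi(\infty)$), whereas you compute $\psi(0)$ directly; both yield $-i\cot\tfrac\theta2$.
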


While the proof of Theorem~\ref{th-main} is similar to the proof of Cardy's formula from \cite{KS-20}, the proof of Schramm's formula by means of Theorem~\ref{th-main} is essentially new, and {we think} is simpler than the original one. Originally, Schramm's formula was stated as a corollary of the weak convergence of interfaces to SLE(6), itself deduced from Cardy's formula~\cite{Smirnov-01}; but actually, it is rather technical to deduce  the convergence of surrounding probabilities from the latter \emph{weak} convergence.

\begin{remark*}
In this text we refrain from referring to the spinor percolation point of view \cite[\S1.3]{KS-20}, however
a reader might find it useful to think in those terms. In particular it would make the definition of $\ploop(\dots)$ below,
Lemma \ref{l-symmetric-difference}, and the proof of Lemma \ref{l-uniform-holderness} more transparent.
\end{remark*}

\section{Preliminaries}\label{sec-preliminaries}

Theorem~\ref{th-main} is deduced from a more general result, interesting in itself.
Let us introduce some notation, state the main result in full strength, and state lemmas used in the proof, giving proofs in the next sections.
Throughout this section we assume that the assumptions of Theorem~\ref{th-main} hold, we omit the superscript $\delta$ so that $a,b,u,v$ mean $a^\delta,b^\delta,u^\delta,v^\delta$ respectively (everywhere except Theorem~\ref{prop-convergence}, Lemmas~\ref{l-lattice-approximation}--\ref{l-mapping}, and other explicitly indicated places), and assume that $a,b,u,v$ are pairwise distinct. Remarkably, although we take $u=v$ in the proof of Schramm's formula at the very end, here we need~$u\ne v$.

\begin{figure}[htbp]
  \centering
  \includegraphics[width=0.14\textwidth]{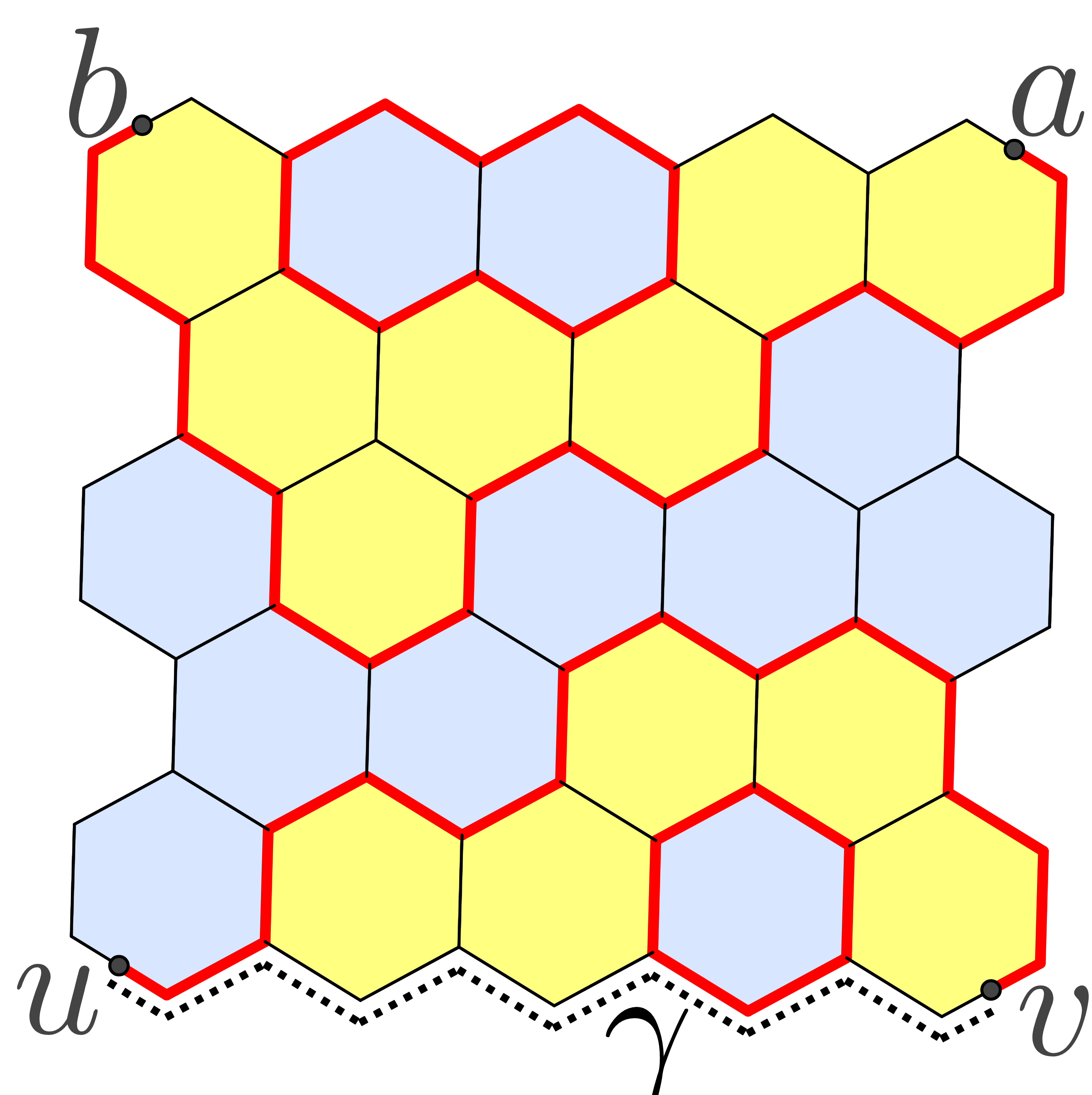}\qquad
  \includegraphics[width=0.14\textwidth]{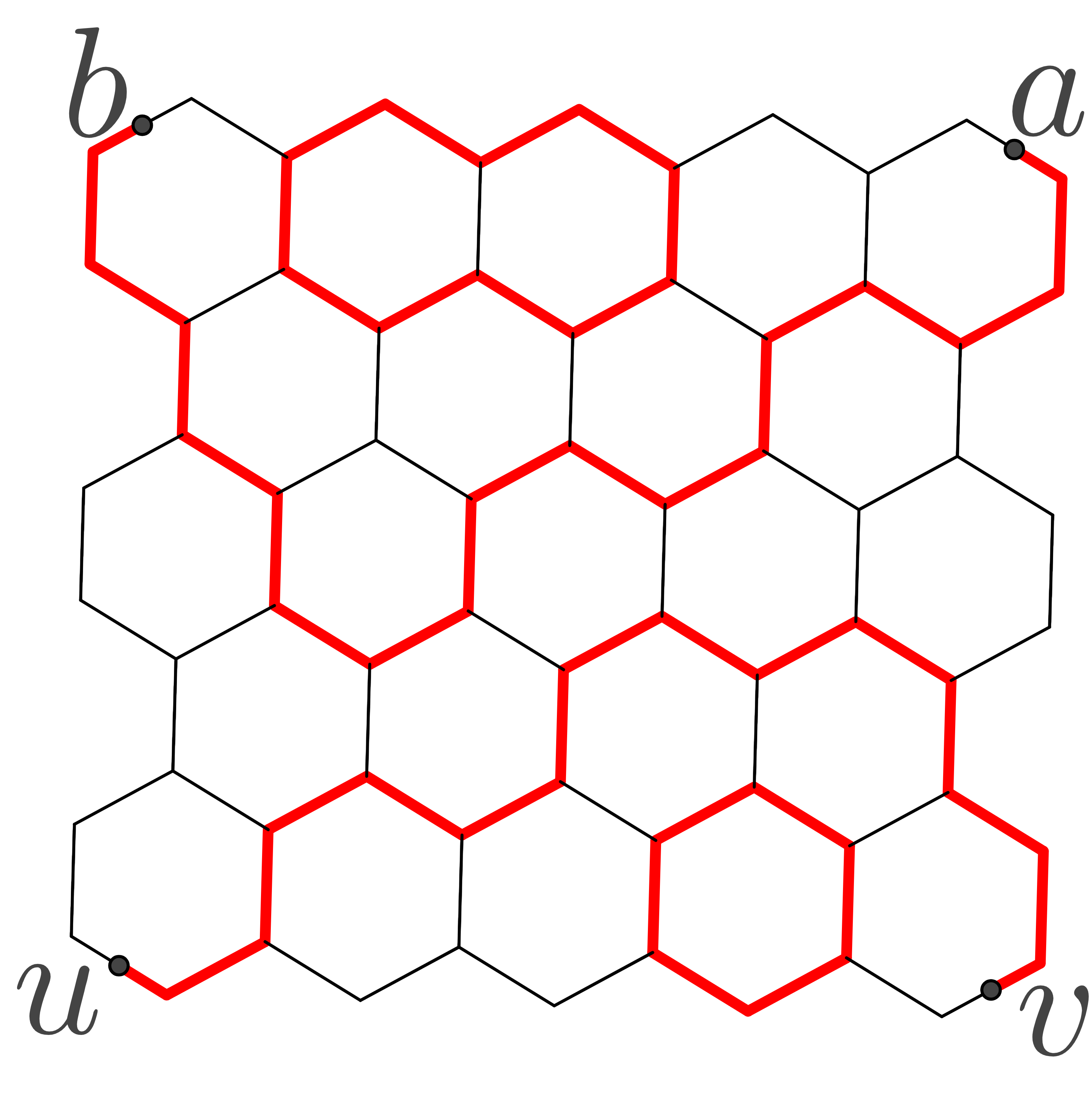}
  \caption{A coloring and a loop configuration.
    See the proof of Lemma~\ref{l-symmetric-difference}.}
   \label{fig:Dobrushin}
\end{figure}

In what follows a \emph{hexagon} is a hexagon of the lattice approximation $\Omega^\delta$. A \emph{midpoint} is a midpoint of a side of a hexagon. A \emph{half-side} is a segment joining a midpoint with an endpoint of the same side. A \emph{broken line} is a simple broken line (possibly closed) consisting of half-sides, viewed as a subset of the plane. A \emph{broken line} $pq$ is such a broken line with distinct endpoints $p$ and $q$. A \emph{loop configuration with disorders at $a,b,u,v$} (or just \emph{loop configuration} for brevity) is a disjoint  union of several broken lines, with exactly two being non-closed and having the endpoints at $a,b,u,v$ and all the other ones being closed. It is easy to see (see, for instance, \cite[\S1.2]{KS-20})  that the number of loop configurations equals the number of colorings of hexagons in two colors. See Figure~\ref{fig:Dobrushin}.

\begin{figure}[htbp]
  \centering
      \includegraphics[width=0.7\textwidth]{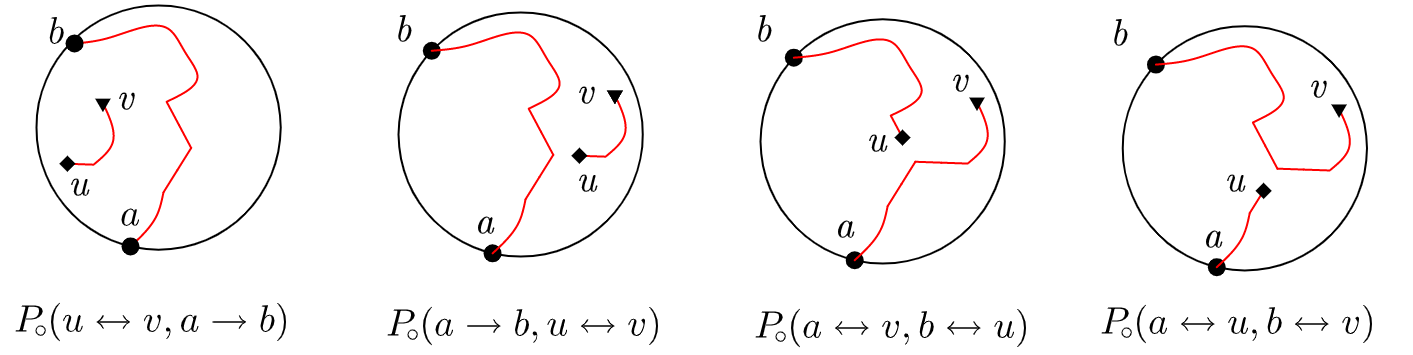}
      \caption{Link patterns and the probabilities they contribute to}
      \label{fig:patterns}
\end{figure}

Denote by $\ploop(a\leftrightarrow u,b\leftrightarrow v)$ the fraction of loop configurations containing a broken line $au$ (and hence another broken line $bv$ as well). Define $\ploop(a\leftrightarrow v,b\leftrightarrow u)$ analogously. See Figure~\ref{fig:patterns}.

Now consider loop configurations containing a broken line $ab$.
The broken line (which we always orient from $a$ to $b$) divides the polygon $\Omega^\delta$ into connected components, each bordering upon $ab$ either from the right or from the left. We say that those connected components \emph{lie to the right} and \emph{to the left} from $ab$ respectively.
Denote by $\ploop(a\rightarrow b,u\leftrightarrow v)$ the fraction of loop configurations containing broken lines $ab$ and $uv$ such that $uv$ lies to the right from $ab$. Define $\ploop(u\leftrightarrow v,a\rightarrow b)$ analogously.
Beware the different meaning of notations $\ploop(a\leftrightarrow v,b\leftrightarrow u)$ and $\ploop(u\leftrightarrow v,a\rightarrow b)$.

We have decomposed the set of all loop configurations into $4$ subsets (called \emph{link patterns}) depending on the arrangement of broken lines with the endpoints $a,b,u,v$ (see Figure~\ref{fig:patterns}).

The \emph{parafermionic observable} is the complex-valued function on the set of pairs of midpoints, distinct from $a$ and $b$, given by the formula
\begin{multline*}
  \hspace{-0.9cm}
  F(u,v):=
  \begin{cases}
   \ploop(a\leftrightarrow v,b\leftrightarrow u)
  -\ploop(a\leftrightarrow u,b\leftrightarrow v)
  +i\sqrt{3}\ploop(u\leftrightarrow v,a\rightarrow b) -i\sqrt{3}\ploop(a\rightarrow b,u\leftrightarrow v),
  & \mbox{if }u\ne v, \\
   i\sqrt{3}P(u\cdots u, a\rightarrow b)
  -i\sqrt{3}P(a\rightarrow b,u\cdots u),
  & \mbox{if }u=v.
  \end{cases}
\end{multline*}

The following lemma and theorem, being the main result in full strength, will imply Theorem~\ref{th-main}.

\begin{lemma}\label{l-symmetric-difference}
For any distinct midpoints $a,b,u,v$ we have $P(u\cdots v, a\rightarrow b)=\ploop(u\leftrightarrow v,a\rightarrow b)$ and  $P(a\rightarrow b,u\cdots v)=\ploop(a\rightarrow b,u\leftrightarrow v)$. Hence $P(u\cdots v, a\rightarrow b)-P(a\rightarrow b,u\cdots v)=\mathrm{Im}\,F(u,v)/\sqrt{3}$.
\end{lemma}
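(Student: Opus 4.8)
The plan is to establish a bijection between colorings of the hexagons of $\Omega^\delta$ and loop configurations with disorders at $a,b,u,v$, under which the event ``$u$ and $v$ lie in the same component to the left of the interface $a^\delta b^\delta$'' corresponds precisely to the event ``the loop configuration contains broken lines $ab$ and $uv$ with $uv$ lying to the left of $ab$''. Once this correspondence is in place, the equalities $P(u\cdots v,a\to b)=\ploop(u\leftrightarrow v,a\to b)$ and $P(a\to b,u\cdots v)=\ploop(a\to b,u\leftrightarrow v)$ are immediate, and the final identity follows by subtracting them: in the definition of $F(u,v)$ for $u\ne v$, the real part involves the two ``$a\leftrightarrow u$'' versus ``$a\leftrightarrow v$'' link patterns (which contribute nothing here), while the imaginary part is exactly $\sqrt 3\,[\ploop(u\leftrightarrow v,a\to b)-\ploop(a\to b,u\leftrightarrow v)]$, so $\mathrm{Im}\,F(u,v)/\sqrt 3$ equals the desired difference of probabilities.

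The key step is describing the bijection. Given a coloring (with the Dobrushin boundary condition fixed), at each vertex of the hexagonal lattice interior to $\Omega^\delta$ three edges meet, separating three hexagons; the two half-sides that locally separate a blue hexagon from a yellow one get connected through that vertex. Doing this at every vertex resolves the set of all sides of hexagons of $\Omega^\delta$ into a disjoint union of broken lines: the interfaces between blue and yellow. Because the boundary condition is blue on the clockwise arc $a^\delta b^\delta$ and yellow on the counterclockwise arc, exactly the points $a^\delta,b^\delta,u^\delta,v^\delta$ can serve as endpoints of non-closed broken lines — at $a^\delta$ and $b^\delta$ the boundary color changes, and at $u^\delta,v^\delta$ we have placed disorders (in the spinor/disorder language of \cite{KS-20}, one inserts a branch point forcing a color flip along any path encircling these midpoints). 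One checks that there are exactly two non-closed broken lines with endpoints among $a,b,u,v$, so we land in one of the four link patterns, and that the map is a bijection (its inverse colors each region bounded by broken lines consistently, the boundary data fixing the color of each region). This is essentially the standard low-temperature/loop expansion of the $O(1)$ model, cited as \cite[\S1.2]{KS-20}.

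It remains to match the events. Suppose the coloring produces the link pattern with broken lines $ab$ and $uv$. Then the broken line $ab$ refines the interface $a^\delta b^\delta$: indeed $a^\delta b^\delta$ is by definition the unique oriented simple path from $a^\delta$ to $b^\delta$ along sides of hexagons with blue on its left and yellow on its right, and the half-side broken line $ab$ traces the same curve. The complement $\Omega^\delta-a^\delta b^\delta$ has components to the left and to the right, and the broken line $uv$ is contained in the union of sides bounding one of these components; moreover $u$ and $v$ are the endpoints of that broken line, hence lie on the boundary of the \emph{same} component. Conversely, if $u,v$ lie in the closure of the same component to the left of the interface, the half-sides emanating from $u$ and $v$ both belong to the boundary of that component, and following the interface-resolution rule inside that component's boundary shows they are joined by a broken line $uv$ lying to the left of $ab$. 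This gives the equivalence ``$u,v$ in the same left component'' $\iff$ ``link pattern $a\to b$, $u\leftrightarrow v$ with $uv$ to the left'', and symmetrically for the right; summing the uniform measure over the corresponding loop configurations yields the two claimed probability identities.

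I expect the main obstacle to be the careful bookkeeping at the midpoints $u^\delta$ and $v^\delta$ and on the boundary: one must verify that the resolution rule, together with the Dobrushin and disorder data, forces exactly the four endpoints and exactly two non-closed broken lines, and — more delicately — that ``$u$ and $v$ in the same component bordering $a^\delta b^\delta$ from a given side'' is literally the same event as the corresponding statement about which side of $ab$ the broken line $uv$ lies on, with no degenerate configurations (e.g.\ a component touching both sides, or $uv$ running along $\partial\Omega^\delta$) causing trouble. This is where thinking in terms of the spinor picture of \cite{KS-20}, as suggested in the remark above, makes the argument transparent: the disorders at $u,v$ are precisely what guarantees that the two marked midpoints are connected to each other or to $a,b$ in one of the four ways, and the left/right distinction is read off from the monodromy of the spinor.
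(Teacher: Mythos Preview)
There is a genuine gap in your bijection. The domain-wall map you describe --- ``connect the half-sides that separate blue from yellow'' --- applied to a plain coloring with Dobrushin boundary conditions produces a loop configuration with disorders at $a$ and $b$ \emph{only}: one non-closed broken line from $a$ to $b$ (the interface) together with closed loops. The midpoints $u,v$ are not special in the coloring; no disorder has been ``placed'' there, and the domain walls have no reason to terminate at $u$ or $v$. So your map does not land in the set of loop configurations with disorders at $a,b,u,v$, and the subsequent event-matching argument never gets off the ground.

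The paper's proof supplies exactly the missing ingredient: one does not try to build a single global bijection, but rather, for each possible interface $ab$, one \emph{fixes in advance} a broken line $\gamma$ from $u$ to $v$ lying to the left of that interface (when such a $\gamma$ exists), and then takes the symmetric difference with $\gamma$. This converts a loop configuration containing $ab$ and $uv$ (with $uv$ on the left) into the domain-wall picture of a coloring with that same interface, and vice versa; the dependence of $\gamma$ on the interface is what makes the event ``$u,v$ in the same left component'' correspond exactly to the link pattern ``$uv$ to the left of $ab$''. Your proposal is missing this symmetric-difference step and the interface-dependent choice of $\gamma$; once you add them, the rest of your outline (and in particular the last paragraph extracting $\mathrm{Im}\,F(u,v)/\sqrt{3}$) goes through.
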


The latter automatically holds for $u=v$ as well.

\begin{theorem}[Continuum limit of the parafermionic observable] \label{prop-convergence}
Let $(\Omega^\delta,a^\delta,b^\delta,u^\delta,v^\delta)$ be a lattice approximation of a domain $(\Omega,a,b,u,v)$ with two marked distinct boundary points $a,b\in\partial \Omega$ and two other points $u,v\in \overline{\Omega}$ such that $\partial\Omega$ is a closed smooth curve. Then
  \begin{equation}\label{eq-limit-function}
  F(u^\delta,v^\delta)\to 
  g\left(
\frac{\psi(u)+\overline{\psi(v)}}{\psi(u)-\overline{\psi(v)}}\right)
\qquad\text{as }\delta\searrow 0.
  \end{equation}
\end{theorem}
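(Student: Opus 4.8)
The plan is to follow the strategy of the proof of Cardy's formula in \cite{KS-20}: establish discrete holomorphicity (or an approximate version thereof) of the observable $F$ away from the disorders, identify the boundary behaviour and the local behaviour near $a,b,u,v$, extract precompactness, and then pin down the limit by a conformal-invariance argument. First I would record the key algebraic identity underlying the choice of $F$: for a midpoint $z$ and the three midpoints $z_1,z_2,z_3$ at the ``far'' ends of the three half-sides emanating from the vertex incident to $z$, the complex weights $1,\tau,\tau^2$ (with $\tau=e^{2\pi i/3}$, so that $i\sqrt3$ is up to rescaling $\tau-\tau^2$) are chosen precisely so that the sum over loop configurations of $\tau^{\#\{\text{turns}\}}$-type contributions telescopes. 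Concretely, I expect the analogue of \cite[\S3]{KS-20} to give, for every vertex $w$ not adjacent to $a,b,u,v$, a relation of the form $F(z_1,v)+\tau F(z_2,v)+\tau^2 F(z_3,v)=0$ (and symmetrically in the second variable), which is the discrete Cauchy--Riemann equation on the triangular lattice. This is obtained by the standard bijective argument: group loop configurations in pairs/triples differing near $w$ and check that their weighted contributions cancel. The dependence on \emph{two} midpoints rather than one is handled because the disorder at $u$ (resp.\ $v$) enters only through the boundary-type condition at that point, so the bulk relation is unaffected.

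Next I would analyse the behaviour of $F$ on $\partial\Omega^\delta$ and near the four special points. Along the blue (clockwise) arc $ab$ and the yellow (counterclockwise) arc, the Dobrushin condition forces some of the link-pattern events to vanish or to coincide, so that $F(\cdot,v)$ restricted to the boundary takes values on two fixed rays (this is the ``discrete Riemann--Hilbert'' boundary condition, again as in \cite{Smirnov-01,KS-20}), with a jump at $a$ and $b$. Near $a$ and $b$ the observable should have the same square-root-type singularity as in the Cardy case, and near $u$ (and $v$) there is a new, milder singularity coming from the disorder there — I would extract its order from the combinatorics of loop configurations with a broken line ending at $u$. Together with the a priori Hölder/equicontinuity estimate (the analogue of Lemma~\ref{l-uniform-holderness}, whose proof the remark says is clarified by the spinor viewpoint), this yields precompactness of the family $\{F(u^\delta,v^\delta)\}$ as functions of the two marked points, and any subcritical limit $f(u,v)$ is holomorphic in $u$ and anti-holomorphic in $v$ on $\Omega$ with the stated boundary values and prescribed local singularities.

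It then remains to identify this limit with the right-hand side of \eqref{eq-limit-function}. Transporting to the upper half-plane via $\psi$, the Möbius map $w\mapsto (w+\bar\zeta)/(w-\bar\zeta)$ (with $\zeta=\psi(v)$) sends $u\in\mathbb{H}$, $v$ fixed, to a point in the plane slit along $(-\infty,-1]\cup[1,+\infty)$: the two boundary arcs of $\partial\Omega$ go to the two slits, and $a,b$ go to $\pm1$ while the diagonal $u=v$ goes to $\infty$. The normalized holomorphic function on the slit plane with values on the two prescribed rays along the slits, the square-root behaviour at $\pm1$, and the correct behaviour at $\infty$ and at the origin is, by the Schwarz reflection / Riemann mapping uniqueness argument, exactly $g$ (which is why $g$ maps the slit plane onto the rhombus with vertices $\pm1,\pm i\sqrt3$ and fixes $0,\pm1$); Proposition~\ref{l-schwarz} then provides its closed form. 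Matching constants uses the explicitly computable value of $F$ in a degenerate situation (e.g.\ $u$ on the boundary, reducing to Cardy's normalization). The main obstacle, as usual in this circle of ideas, is not the holomorphicity identity but the \emph{uniform} control near the disorders and the passage to the limit: proving equicontinuity up to and including neighbourhoods of $u$ and $v$ (where the new singularity sits) and ruling out mass escaping to the boundary, i.e.\ the two-variable analogue of Lemma~\ref{l-uniform-holderness}; this is the step I would expect to occupy most of the technical work, and which I would model closely on \cite{KS-20} with the spinor reformulation as a guide.
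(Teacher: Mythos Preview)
Your overall architecture---discrete holomorphicity, precompactness via a H\"older estimate, and identification of subsequential limits by a boundary-value problem---is exactly the paper's. The discrete Cauchy--Riemann relation you anticipate is Lemma~\ref{l-discrete-analiticity} (your $1,\tau,\tau^2$ are the normalized $p-z,q-z,r-z$), and the equicontinuity is Lemma~\ref{l-uniform-holderness}. But several of the details you rely on to pin down the limit are wrong, and one key mechanism is missing.

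First, there are no singularities. With $v\ne a,b$ fixed, the limit function $f(\cdot,v)$ is continuous on all of $\overline{\Omega}$, with finite values $f(a,v)=-1$, $f(b,v)=+1$; the apparent issue at $u=v$ is a removable singularity (the H\"older bound gives continuity there, and Morera gives analyticity on $\Omega\setminus\{v\}$, hence on $\Omega$). So the ``square-root-type'' behaviour at $a,b$ and the ``new, milder singularity'' at the disorder are not present and cannot be used to characterise the limit.

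Second, and more seriously, your boundary picture only works when $v\in\partial\Omega$. The M\"obius map $w\mapsto(w+\bar\zeta)/(w-\bar\zeta)$ sends the real line to the two slits $(-\infty,-1]\cup[1,\infty)$ only when $\zeta=\psi(v)$ is real; for interior $v$ it sends $\partial\Omega$ to a circle through $\pm1$, and the diagonal $u=v$ goes to a point on the imaginary axis, not to $\infty$. Correspondingly, the combinatorial boundary condition (Lemma~\ref{l-boundary-values}) requires \emph{both} $u$ and $v$ on $\partial\Omega^\delta$; for interior $v$ no link pattern is forced to vanish when $u$ hits the boundary, so your ``two fixed rays'' condition on $F(\cdot,v)|_{\partial\Omega}$ is not available. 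The paper closes this gap with a tool you do not mention: the conjugate-antisymmetry $F(u,v)=-\overline{F(v,u)}$ (Lemma~\ref{l-conjugate-symmetry}). One first identifies $f(\cdot,v)$ for $v\in\partial\Omega$ via the genuine Riemann--Hilbert data (three arcs mapped to three sides of a triangle, determined by the argument principle); then for interior $v$ and boundary $u$ one reads off $f(u,v)=-\overline{f(v,u)}$ from the already-known case; finally analyticity in $u$ propagates this to the interior. Without this antisymmetry bootstrap your identification step does not go through.
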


The proof of this theorem uses the following properties of the parafermionic observable.

\begin{lemma}[Conjugate antisymmetry] \label{l-conjugate-symmetry}
  The function $F$ is {\emph{conjugate-antisymmetric}}, i.e., $$F(u,v)=-\overline{F(v,u)}.$$
\end{lemma}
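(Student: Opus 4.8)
The plan is to prove the conjugate-antisymmetry $F(u,v)=-\overline{F(v,u)}$ by unwinding the definition of $F$ and tracking how each of the four link-pattern probabilities transforms when we swap the roles of $u$ and $v$. First I would handle the generic case $u\ne v$. Looking at the definition, $F(u,v)=\ploop(a\leftrightarrow v,b\leftrightarrow u)-\ploop(a\leftrightarrow u,b\leftrightarrow v)+i\sqrt3\,\ploop(u\leftrightarrow v,a\rightarrow b)-i\sqrt3\,\ploop(a\rightarrow b,u\leftrightarrow v)$. The key observation is that the \emph{set} of loop configurations does not depend on the labeling of the disorder points $u,v$ — a loop configuration with disorders at $a,b,u,v$ is literally the same combinatorial object as one with disorders at $a,b,v,u$ — so each of the four fractions above is symmetric in a precise combinatorial sense once we keep track of which broken line connects which pair of points.

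The second step is the bookkeeping. Swapping $u\leftrightarrow v$ exchanges the event ``broken line $au$ and broken line $bv$'' with the event ``broken line $av$ and broken line $bu$'', hence $\ploop(a\leftrightarrow u,b\leftrightarrow v)$ and $\ploop(a\leftrightarrow v,b\leftrightarrow u)$ trade places; consequently the real part $\mathrm{Re}\,F(u,v)=\ploop(a\leftrightarrow v,b\leftrightarrow u)-\ploop(a\leftrightarrow u,b\leftrightarrow v)$ changes sign under $u\leftrightarrow v$. For the imaginary part one must be more careful: in a configuration containing broken lines $ab$ and $uv$, the unoriented broken line $uv$ is the same set whether we call its endpoints $u,v$ or $v,u$, and the notions ``lies to the right of $ab$'' and ``lies to the left of $ab$'' are intrinsic to the configuration and do not reference the labels of $u,v$. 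Therefore $\ploop(u\leftrightarrow v,a\rightarrow b)$ and $\ploop(a\rightarrow b,u\leftrightarrow v)$ are each \emph{invariant} under swapping $u\leftrightarrow v$, so $\mathrm{Im}\,F(u,v)=\sqrt3\,(\ploop(u\leftrightarrow v,a\rightarrow b)-\ploop(a\rightarrow b,u\leftrightarrow v))$ is unchanged. Combining, $F(v,u)=-\mathrm{Re}\,F(u,v)+i\,\mathrm{Im}\,F(u,v)=-\overline{F(u,v)}$, which is the claim. The degenerate case $u=v$ is immediate from the second branch of the definition, since $F(u,u)$ is purely imaginary, so $F(u,u)=-\overline{F(u,u)}$ trivially; and one should also check consistency that the $u\ne v$ formula limits correctly, although for the lemma as stated only the identity is needed.

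The main obstacle I anticipate is the orientation subtlety in the imaginary part: one must be genuinely convinced that ``$uv$ lies to the right of the oriented broken line $ab$'' is a property of the underlying loop configuration together with the choice of the pair $\{a,b\}$ as the oriented interface, and carries \emph{no} dependence on whether the remaining non-closed broken line is traversed from $u$ to $v$ or from $v$ to $u$. This is visually obvious from Figure~\ref{fig:patterns}, but a clean argument should point out that ``right component'' versus ``left component'' is determined by the orientation of $ab$ alone (via the Dobrushin-type coloring convention, or equivalently the spinor picture mentioned in the remark), so relabeling $u\leftrightarrow v$ permutes nothing in the $i\sqrt3$-terms. Once that is pinned down, the rest is a one-line sign computation. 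If desired, one can also phrase the whole argument via the involution on loop configurations that simply renames the disorder labels $u\leftrightarrow v$ and observe that it preserves the set of configurations and the right/left classification while swapping the two ``$\leftrightarrow$'' link patterns of the real part.
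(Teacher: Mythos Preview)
Your proof is correct and is essentially the same as the paper's: the paper's one-line argument simply records that $\ploop(u\leftrightarrow v,a\rightarrow b)=\ploop(v\leftrightarrow u,a\rightarrow b)$ and $\ploop(a\rightarrow b,u\leftrightarrow v)=\ploop(a\rightarrow b,v\leftrightarrow u)$ and declares the rest a direct consequence of the definition, which is exactly your observation that the imaginary part is invariant while the real part changes sign under $u\leftrightarrow v$. Your treatment is more detailed (including the $u=v$ case and the discussion of the orientation subtlety), but the underlying idea is identical.
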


\begin{lemma}[Discrete analyticity]\label{l-discrete-analiticity}
  Let $z$ be a common vertex of $3$ hexagons.
  Let $p,q,r$ be the midpoints of their common sides in the counterclockwise order.
  Then for each midpoint $v\ne p,q,r,a,b$ we have
  \begin{equation}\label{eq-l-discrete-analiticity}
    (p-z)F(p,v)+(q-z)F(q,v)+(r-z)F(r,v)=0.
  \end{equation}
\end{lemma}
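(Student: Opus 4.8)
The plan is to prove the three-term relation by summing over loop configurations and using a combinatorial bijection between configurations that differ only in a small neighborhood of the vertex $z$. Fix the midpoint $v$ and fix a coloring of all hexagons except possibly the local configuration near $z$; equivalently, fix the loop configuration outside a small disk around $z$. The key observation is that the contribution of the half-sides near $z$ is governed by how the broken lines route through the triangular region bounded by the half-sides emanating from the midpoints $p,q,r$ toward $z$. Up to the data far from $z$, there are only finitely many local pictures, and the three complex weights $(p-z), (q-z), (r-z)$ are the cube roots of unity (times a common factor $\delta/2$), i.e.\ they sum to zero and are related by multiplication by $e^{\pm 2\pi i/3}$.

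The first step is to reduce~\eqref{eq-l-discrete-analiticity} to a purely local identity. I would group the loop configurations contributing to $F(p,v)$, $F(q,v)$, $F(r,v)$ according to their restriction to the complement of a small hexagonal neighborhood $N_z$ of $z$ and to which of the six half-sides adjacent to $z$ they use. For configurations in which the disorder point (one of $p,q,r$) is an endpoint of a broken line, the broken line must enter $N_z$; pairing up configurations that agree outside $N_z$, the local picture is a single arc (or arc endpoint) passing through the three half-sides $[p,z],[q,z],[r,z]$, and the different ways of closing it up inside $N_z$ correspond exactly to cyclically permuting the roles of $p,q,r$. The second step is then to check that under this cyclic permutation the link-pattern type (which determines which of the four terms of $F$ a configuration feeds into, and with which sign and which factor $1$ or $i\sqrt3$) transforms in precisely the way needed so that $(p-z)F(p,v)+(q-z)F(q,v)+(r-z)F(r,v)$ becomes a sum of terms each proportional to $(p-z)+(q-z)+(r-z)=0$. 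This is where the specific weights $1, -1, i\sqrt3, -i\sqrt3$ in the definition of $F$ — and the factor $\sqrt3$, matching $|1-e^{2\pi i/3}|$ — do their job: they are engineered so that a single arc flipping between the three positions produces the three cube-root-of-unity phases.

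Concretely, the case analysis splits according to whether the three half-sides $[p,z],[q,z],[r,z]$ are traversed by (a) a single through-arc (the arc enters on one half-side and leaves on another, the third being unused), (b) a through-arc whose free endpoint is one of the disorders (so one of $p,q,r$ is an actual endpoint), or (c) more degenerate configurations; but because each hexagon is colored and the interface/loop structure is locally determined by the coloring of the (at most three) hexagons around $z$, the enumeration is short. In each case I would exhibit the bijection explicitly and track the link pattern, then sum. I expect that essentially all terms cancel in triples of the form (weight)$\cdot(\text{common configuration})$ with the weight running over $p-z,q-z,r-z$, leaving zero.

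The main obstacle is the bookkeeping in step two: correctly identifying, for each local picture, which global link pattern each of the three associated configurations belongs to, and verifying that the induced permutation of patterns is compatible with the cyclic structure of the weights. In particular one must handle carefully the configurations where $v$ is close to $z$, or where a loop rather than a non-closed broken line passes through $N_z$, and the boundary cases where one of $p,q,r$ coincides with $v$ are excluded by hypothesis so need not be treated. This is the standard ``discrete holomorphicity via local rearrangement'' argument (as in \cite{Smirnov-01, KS-20}) adapted to the two-disorder observable, so I would lean on those references for the pattern-matching combinatorics and spell out only the new local cases introduced by the second pair of disorders; the spinor viewpoint mentioned in the remark above would streamline exactly this step.
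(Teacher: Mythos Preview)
Your proposal is correct and follows essentially the same approach as the paper: group loop configurations with disorder at $p,q,r$ into triples that differ only by two half-sides adjacent to $z$, and show each triple contributes zero because either all three lie in the same link pattern (so the weights $p-z,q-z,r-z$ sum to zero) or they lie in three different patterns whose weights $i\sqrt{3},-1,1$ combine with the cube-root-of-unity factors to cancel. The paper's proof is just the explicit execution of exactly this plan, with the single nontrivial case written out as $(p-z)i\sqrt{3}-(q-z)+(r-z)=0$; your ``bookkeeping'' step two is this one-line verification.
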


\begin{corollary}[Cauchy's formula]\label{cor-Cauchy}
Let $\gamma=w_0w_1\dots w_{n-1}$ with $w_n:=w_0$ be a closed broken line with the vertices at the centers of hexagons such that the hexagons centered at $w_j$ and $w_{j+1}$ share a side for each $j=0,\dots,n-1$. Denote by {$p_j$} the midpoint of the side $w_jw_{j+1}$. Assume that $a,b,v$ lie outside $\gamma$. Then the \emph{discrete integral of $F$ along the contour $\gamma$} defined by the formula
$$ \int\limits_{\gamma}^\# F(z,v)  \, d^{\#}z  := \sum\limits_{j=0}^{n-1} F({p_j},v) (w_{j+1}-w_{j})
$$
vanishes.
\end{corollary}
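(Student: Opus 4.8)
The plan is to deduce Cauchy's formula from the local identity of Lemma~\ref{l-discrete-analiticity} by the discrete counterpart of Green's theorem. The first step is to reduce the discrete integral along the global contour $\gamma$ to a sum of discrete integrals along the boundaries of the elementary triangles of the triangular lattice (of hexagon centers) enclosed by $\gamma$; the second step is to show that each such elementary integral vanishes.

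For the first step, observe that the discrete integral is additive and antisymmetric already at the level of oriented edges: for an oriented edge $e=(w\to w')$ joining the centers of two hexagons sharing a side with midpoint $p_e$, put $I(e):=F(p_e,v)(w'-w)$, so that $I(\bar e)=-I(e)$ and $\int_\gamma^\#F(z,v)\,d^\#z=\sum_j I(w_j\to w_{j+1})$ extends to a linear functional $I$ on the $1$-chains supported on edges of the triangular lattice. As a closed walk, $\gamma$ is a $1$-cycle, hence it bounds in the plane, so $\gamma=\sum_T m_T\,\partial T$ in $1$-chains, where $T$ ranges over the elementary triangles enclosed by $\gamma$, each $\partial T$ is oriented counterclockwise, and $m_T$ is the winding number of $\gamma$ around $T$ (equivalently, one peels off the triangles adjacent to the contour one at a time, each time cancelling the shared interior edge). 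Hence $\int_\gamma^\#F(z,v)\,d^\#z=\sum_T m_T\,I(\partial T)$, and it remains to prove $I(\partial T)=0$.

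For the second step, fix an enclosed elementary triangle $T$ with vertices $w_1,w_2,w_3$ (centers of three pairwise adjacent hexagons) in counterclockwise order, let $z$ be the common vertex of these three hexagons, and let $p_k$ be the midpoint of the side $w_kw_{k+1}$ (indices modulo~$3$). Then $z$ is simultaneously the circumcenter and the centroid of the equilateral triangle $w_1w_2w_3$, and $p_1,p_2,p_3$ are the midpoints of the three sides of the hexagonal lattice emanating from $z$, listed counterclockwise. The key elementary identity is
\[
w_{k+1}-w_k=2\sqrt{3}\,i\,(p_k-z),\qquad k=1,2,3,
\]
which, being invariant under orientation-preserving similarities $w\mapsto\alpha w+\beta$, one checks once in the normalization $w_1=1$, $w_2=\omega$, $w_3=\omega^2$ with $\omega=e^{2\pi i/3}$ and $z=0$, using $1+\omega+\omega^2=0$. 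Since $a,b,v$ lie outside $\gamma$ and $v\neq a,b$, none of $p_1,p_2,p_3$ equals $a$ or $b$ and $v\notin\{p_1,p_2,p_3,a,b\}$, so Lemma~\ref{l-discrete-analiticity} applies at $z$ and yields
\[
I(\partial T)=\sum_{k=1}^{3}F(p_k,v)(w_{k+1}-w_k)=2\sqrt{3}\,i\sum_{k=1}^{3}(p_k-z)F(p_k,v)=0 .
\]
Combined with the first step this gives $\int_\gamma^\#F(z,v)\,d^\#z=0$.

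The geometric identity and the sign bookkeeping are routine; the only point deserving care is the reduction in the first step when $\gamma$ is allowed to be non-simple and to run close to $\partial\Omega^\delta$, where one must make sure that every triangle $T$ with $m_T\neq 0$ consists of three genuine hexagons of $\Omega^\delta$ and that its central vertex $z$ is a common vertex of exactly three hexagons not adjacent to $a$, $b$ or $v$. This is exactly where the hypothesis that $a,b,v$ lie outside $\gamma$ (together with the simple connectedness of $\Omega^\delta$) is used.
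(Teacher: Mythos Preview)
Your proof is correct and follows essentially the same approach as the paper: reduce to elementary triangular contours via additivity of the discrete integral, then on each triangle invoke Lemma~\ref{l-discrete-analiticity} through the identity $w_{k+1}-w_k=2\sqrt{3}\,i\,(p_k-z)$. Your write-up is considerably more detailed --- the explicit $1$-chain language, the verification of the geometric identity in a normalized triangle, and the closing remarks about non-simple contours and the location of $a,b,v$ --- but the paper's own proof is just the same two moves compressed into two sentences.
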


Denote by $[z;w]$ the straight-line segment with the endpoints $z,w\in\mathbb{C}$.

\begin{lemma}[Boundary values] \label{l-boundary-values}
Take distinct midpoints $a,b,u,v\in \partial \Omega^\delta$.
Let us go around $\partial \Omega^\delta$ counterclockwise and write the order of these $4$ points. Then
\begin{equation}\label{eq-boundary-values}
    F(u,v)\in
    \begin{cases}
      [+1; +i\sqrt{3}], & \mbox{if the order is } a,b,u,v;\\
      [-1; +i\sqrt{3}], & \mbox{if the order is } a,b,v,u;\\
      [-1; -i\sqrt{3}], & \mbox{if the order is } a,u,v,b;\\
      [+1; -i\sqrt{3}], & \mbox{if the order is } a,v,u,b;\\
      [-1; +1],         & \mbox{if the order is } a,u,b,v
                                     \mbox{ or } a,v,b,u.
    \end{cases}
\end{equation}
\end{lemma}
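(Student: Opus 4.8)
The plan is to prove Lemma~\ref{l-boundary-values} by a combinatorial analysis of loop configurations when the disorders $a,b,u,v$ all lie on $\partial\Omega^\delta$, translating the arrangement of non-closed broken lines into constraints on the four terms appearing in the definition of $F(u,v)$. The key observation is that when a midpoint lies on the boundary, the broken line emanating from it into $\Omega^\delta$ is pinned by the Dobrushin boundary condition, so not all four link patterns are realizable: the cyclic order of $a,b,u,v$ on $\partial\Omega^\delta$ restricts which pairings $\{a\leftrightarrow u, b\leftrightarrow v\}$, $\{a\leftrightarrow v, b\leftrightarrow u\}$, $\{a\rightarrow b \text{ with } u\leftrightarrow v\}$ are possible, and in the last case whether $uv$ lies to the left or right of $ab$.

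First I would recall from Lemma~\ref{l-symmetric-difference} (and the discussion of link patterns) that the four nonnegative quantities $P_1:=\ploop(a\leftrightarrow u,b\leftrightarrow v)$, $P_2:=\ploop(a\leftrightarrow v,b\leftrightarrow u)$, $P_3:=\ploop(u\leftrightarrow v,a\rightarrow b)$, $P_4:=\ploop(a\rightarrow b,u\leftrightarrow v)$ sum to $1$, and $F(u,v)=P_2-P_1+i\sqrt3(P_3-P_4)$. So $F(u,v)$ always lies in the (solid) quadrilateral with vertices $\pm1,\pm i\sqrt3$; the content of the lemma is that for boundary disorders it is confined to one of the five listed edges. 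The plan is to show, case by case on the counterclockwise order, that exactly one or two of the $P_j$ vanish identically:
\begin{itemize}
\item[(i)] order $a,b,u,v$: a broken line $au$ would have to cross the one from $b$, so $P_1=0$; also $uv$ lying to the right of $ab$ is topologically impossible with this order, so $P_4=0$; hence $F=P_2+i\sqrt3\,P_3$ with $P_2+P_3=1$, giving the segment $[+1;+i\sqrt3]$.
\item[(ii)] order $a,b,v,u$: symmetrically $P_2=0$ and $P_4=0$, giving $[-1;+i\sqrt3]$.
\item[(iii)] order $a,u,v,b$: now $P_1=0$ (same crossing obstruction) and $P_3=0$ ($uv$ cannot lie to the left of $ab$), so $F=-P_2-i\sqrt3\,P_4$, the segment $[-1;-i\sqrt3]$.
\item[(iv)] order $a,v,u,b$: symmetrically $P_2=0$, $P_3=0$, giving $[+1;-i\sqrt3]$.
\item[(v)] order $a,u,b,v$ (or $a,v,b,u$): here a broken line $ab$ would separate $u$ from $v$, so $P_3=P_4=0$, whence $F=P_2-P_1$ with $P_1+P_2=1$, the segment $[-1;+1]$.
\end{itemize}

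The routine verifications are the planarity/crossing arguments: two disjoint simple broken lines in the simply-connected polygon $\Omega^\delta$ with all four endpoints on $\partial\Omega^\delta$ cannot have their endpoint pairs interleaved on the boundary (a Jordan-curve argument), and if the broken line $ab$ is present it splits $\Omega^\delta$ into a left part and a right part whose boundary traces are determined by the counterclockwise order, so $u$ and $v$ land in a prescribed part or in different parts. \textbf{The main obstacle} I anticipate is bookkeeping the orientation conventions precisely — recalling that $ab$ is oriented from $a$ to $b$, that "left/right" is defined relative to that orientation, and matching this against the counterclockwise order on $\partial\Omega^\delta$ so that, e.g., in case (i) the component containing $u,v$ is genuinely the left one and in case (iii) genuinely the right one. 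It is entirely a matter of carefully drawing the five pictures (as in Figure~\ref{fig:patterns}) and reading off which $P_j$ are forced to vanish; once that is done the interval membership is immediate from $F(u,v)=P_2-P_1+i\sqrt3(P_3-P_4)$ and $\sum P_j=1$.
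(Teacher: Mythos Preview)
Your approach is exactly the paper's: use $P_1+P_2+P_3+P_4=1$, argue that in each cyclic order two of the $P_j$ vanish by planarity, and read off the segment. The paper only writes out the case $a,b,u,v$ and dismisses the rest as analogous; you spell all five out, and cases (i), (ii), (v) are fine.

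However, in cases (iii) and (iv) you have swapped which of $P_1,P_2$ vanishes. In the order $a,u,v,b$ the pairs $\{a,u\}$ and $\{b,v\}$ are \emph{not} interleaved on $\partial\Omega^\delta$ (they are adjacent), so there is no crossing obstruction to $P_1$; it is the pairs $\{a,v\}$ and $\{b,u\}$ that are interleaved, forcing $P_2=0$. With $P_2=P_3=0$ one gets $F=-P_1-i\sqrt3\,P_4\in[-1;-i\sqrt3]$, which is the correct segment, but not via your stated vanishing. (Your displayed formula $F=-P_2-i\sqrt3\,P_4$ also contains a sign slip: from $P_1=P_3=0$ one would get $F=+P_2-i\sqrt3\,P_4\in[+1;-i\sqrt3]$, the wrong segment.) The same swap occurs in case (iv). This is precisely the bookkeeping hazard you flagged; once you correct which chord-pair is linked in each order, the argument goes through and coincides with the paper's.
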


By the \emph{inradius} of a closed bounded domain $(\overline{\Omega},a,b,c)$ with three marked points we mean the minimal radius of a disk $D$ such that no two points among $a,b,c$ belong to the same connected component of $\overline{\Omega}-D$. For instance, if $\overline{\Omega}$ is the triangle with the vertices $a,b,c$ then the inradius of $(\overline{\Omega},a,b,c)$ equals the usual inradius of the triangle, i.e. the radius of the inscribed circle. We are going to apply the following two lemmas to $c=v$.

\begin{lemma}[Uniform H\"olderness] \label{l-uniform-holderness}
There exist $\eta,C>0$ such that for any distinct midpoints $a,b,c,u,w$ of a lattice approximation $\Omega^\delta$ of an {arbitrary bounded simply-connected domain and any broken line~$uw$} we have
  \begin{equation*}
    |F(u,c)-F(w,c)|\le C\left(\frac{\mathrm{diam}\, uw}{R}\right)^\eta,
  \end{equation*}
where $R$ is the inradius of $(\Omega^\delta,a,b,c)$.
The same inequality remains true, if we allow $u,w\in\{a,b,c\}$ and
set $F(a,c):=-1$, $F(b,c):=+1$.
\end{lemma}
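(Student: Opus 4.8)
The plan is to prove uniform H\"olderness of $u\mapsto F(u,c)$ by the standard ``discrete harmonic measure + Beurling estimate'' scheme, using discrete analyticity (Lemma~\ref{l-discrete-analiticity}) and the boundary bounds (Lemma~\ref{l-boundary-values}) as the only inputs about $F$. First I would fix $c$ and think of $h(u):=F(u,c)$ as a discrete analytic function of the single variable $u$ on the set of midpoints, extended to $\{a,b,c\}$ by the prescribed values $-1,+1$ and whatever the $u=v$ definition gives at $c$. The key structural fact from Corollary~\ref{cor-Cauchy} is that the real and imaginary parts of such a discrete analytic function are (essentially) discrete harmonic away from $a,b,c$; more precisely, $\mathrm{Re}\,h$ and $\mathrm{Im}\,h$ satisfy a maximum principle on $\Omega^\delta$ minus a neighborhood of $\{a,b,c\}$. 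Combined with Lemma~\ref{l-boundary-values}, which says $h$ takes values in a fixed bounded set (the union of the five segments, all inside a disk of radius $2$) on $\partial\Omega^\delta$, this gives an a priori uniform bound $|h|\le \mathrm{const}$ on all of $\Omega^\delta$.

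Next, to upgrade boundedness to H\"older continuity, I would run a multi-scale (dyadic) argument. Fix two midpoints $u,w$ with $r:=\mathrm{diam}\,uw$, and let $R$ be the inradius of $(\Omega^\delta,a,b,c)$, so that by definition there is a disk $D$ of radius $R$ separating $a$, $b$, $c$ in $\overline\Omega$; WLOG $r\le R$ (otherwise the claimed bound is trivial after enlarging $C$). Consider concentric annuli $A_k=\{z:2^k r\le |z-u|\le 2^{k+1}r\}$ for $k=0,\dots,N$ with $2^N r\approx R$. The point is that in each such annulus that avoids $\{a,b,c\}$, the oscillation of the bounded discrete harmonic functions $\mathrm{Re}\,h,\mathrm{Im}\,h$ on the inner circle is controlled by a fixed fraction of their oscillation on the outer circle — this is the discrete Harnack/oscillation-decay estimate for harmonic functions on the triangular lattice, valid with a universal constant and uniformly in $\delta$ (one can cite \cite{KS-20} for the precise discrete version). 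Iterating over the $N\approx \log_2(R/r)$ good annuli yields $\mathrm{osc}_{uw} h\le C\,\lambda^N\le C(r/R)^\eta$ with $\eta=\log_2(1/\lambda)>0$. The presence of the broken line $uw$ joining $u$ and $w$ guarantees $u,w$ and the whole connecting path lie in the innermost ball, so the estimate genuinely bounds $|h(u)-h(w)|$.

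The one genuinely delicate point — and the main obstacle — is the behavior near the disorders $a,b,c$, where $F$ is \emph{not} discrete analytic and can have a singularity (indeed in the continuum limit $\mathrm{Im}\,g$ has a square-root-type branch point). Some of the annuli $A_k$ will intersect the disk $D$ and hence may surround one or two of $a,b,c$; for those scales the naive oscillation-decay step fails. The fix is to observe that by the definition of the inradius, the disk $D$ of radius $R$ separates all three points, so a definite proportion of the scales $2^k r$ with $2^k r\lesssim R$ produce annuli lying \emph{outside} $D$ and thus avoiding $\{a,b,c\}$; discarding the ``bad'' annuli only changes the constant $C$ and the exponent $\eta$ by a universal factor. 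To make this clean I would actually center the annuli not at $u$ but argue via a chain: go from $u$ out to the boundary of a ball of radius $\sim R$ around $u$ through good annuli, similarly from $w$, and connect the two endpoints using that on the scale $R$ the function is already uniformly bounded and its oscillation over any such region is $\le 2\|h\|_\infty$; the telescoping then gives the bound. The spinor/percolation interpretation hinted at in the remark makes the behaviour at $a,b,c$ and the role of the boundary values intuitive, but the quantitative core is purely the universal discrete-potential-theory estimate applied to the discrete-analytic function $h$, exactly as in the proof of Cardy's formula in \cite{KS-20}.
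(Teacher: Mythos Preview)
Your approach has a genuine gap at its very first step: the parafermionic discrete analyticity of Lemma~\ref{l-discrete-analiticity} does \emph{not} imply that $\mathrm{Re}\,F(\cdot,c)$ and $\mathrm{Im}\,F(\cdot,c)$ are discrete harmonic, nor that they satisfy a maximum principle or an oscillation-decay estimate. The relation $(p-z)F(p,v)+(q-z)F(q,v)+(r-z)F(r,v)=0$ is one complex equation per vertex in three complex unknowns living on midpoints; the count of equations versus unknowns is too weak to produce a closed Laplacian-type identity for either real or imaginary part. This is exactly the well-known obstacle that distinguishes the percolation/$O(n)$ parafermion from the Ising $s$-holomorphic observable: for the latter one gets a genuine discrete harmonic primitive, for the former one does not, and precompactness cannot be extracted from discrete analyticity alone. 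Your appeal to ``the discrete Harnack/oscillation-decay estimate for harmonic functions on the triangular lattice'' therefore has nothing to apply to. (Your final sentence is also a misreading of~\cite{KS-20}: the H\"older estimate there is not discrete potential theory either.)

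The paper's proof is completely different and purely combinatorial/probabilistic. One observes that taking the symmetric difference of a loop configuration with the broken line $uw$ preserves the link pattern unless \emph{both} non-closed broken lines hit $uw$; this bounds $|F(u,c)-F(w,c)|$ by (a constant times) the probability $\ploop(a,b,c\leftrightarrow uw)$ of such ``tripod'' configurations. An explicit injection then dominates this by a ``bipod'' probability $\ploop(\cdot,\cdot\leftrightarrow uw)$ for whichever pair among $a,b,c$ lies in the same component of $\Omega^\delta$ minus a disk of radius $R$ (this is where the inradius enters). A further symmetric-difference bijection turns a bipod configuration into a coloring with a monochromatic crossing of the annulus between radii $r\asymp\mathrm{diam}\,uw$ and $R$, and the Russo--Seymour--Welsh inequality gives the $(r/R)^\eta$ bound. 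No harmonicity, maximum principle, or Harnack estimate is used anywhere; the crucial external input is RSW, which you never invoke.
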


\begin{remark*}
{Here $\mathrm{diam}\,uw$ cannot be replaced by~$|u-w|$ in general, e.g., for $\Omega=\mathrm{Int}\,D^2-[0;1]$.}
\end{remark*}

\begin{lemma}[Geometry of a lattice approximation] \label{l-lattice-approximation}
Let $(\Omega^\delta,a^\delta,b^\delta,c^\delta,u^\delta,w^\delta)$ be a lattice approximation of the domain $(\Omega,a,b,c,u,w)$ with three distinct marked points $a,b,c\in\overline{\Omega}$ and two more marked points $u,w\in\overline{\Omega}$ such that
$\partial\Omega$ is a closed smooth curve. Then there exist $C_\Omega,R_{\Omega,a,b,c}>0$ not depending on $u,w,\delta$ such that:
\begin{enumerate}
  \item $|u-u^\delta|<C_\Omega \delta$;
  \item $u^\delta$ and $w^\delta$ can be joined by a broken line $u^\delta w^\delta$ of diameter less than $C_\Omega(|u-w|+\delta)$;
  \item the inradius of $(\Omega^\delta,a^\delta,b^\delta,c^\delta)$ is greater than $R_{\Omega,a,b,c}$.
\end{enumerate}
\end{lemma}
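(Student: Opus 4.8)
\textbf{Proof plan for Lemma~\ref{l-lattice-approximation}.}

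The three claims are geometric facts about how a lattice approximation sits inside a fixed smooth domain, and all three ultimately come from two uniform estimates: (a) that the boundary of the lattice approximation $\partial\Omega^\delta$ stays within $O(\delta)$ (in Hausdorff distance) of $\partial\Omega$, and (b) that it inherits a uniform ``corridor width'' from the smoothness of $\partial\Omega$. Concretely, since $\partial\Omega$ is $C^1$ with non-vanishing derivative, it is compact and has a positive \emph{reach}: there is $\rho_0>0$ such that for every $p\in\partial\Omega$ the two boundary-parallel disks of radius $\rho_0$ tangent at $p$ lie one inside $\Omega$ and one outside. I would first record this, together with the consequence that for $\delta$ small enough (smaller than some $\delta_0(\Omega)$) every hexagon within distance $2\delta$ of $\partial\Omega$ that meets $\Omega$ is separated from ``deep interior'' hexagons by at most $O(1)$ hexagons, so that the union of hexagons inside $\Omega$ has exactly one macroscopic component (hence $\Omega^\delta$ is well defined and $\mathrm{dist}_H(\partial\Omega^\delta,\partial\Omega)=O(\delta)$), and also that $\overline{\Omega^\delta}$ is connected with a ``collar'' of uniformly bounded hexagonal width. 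For $\delta\ge\delta_0$ there are only finitely many scales and the bounds are trivial after enlarging the constants; so throughout I may assume $\delta<\delta_0$.

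For (1): by construction $u^\delta$ is the midpoint of a side of a hexagon of $\Omega^\delta$ closest to $u$. If $u\in\Omega$ lies at distance $\ge 3\delta$ from $\partial\Omega$, the hexagon containing $u$ is a hexagon of $\Omega^\delta$, and a midpoint of one of its sides is within $\delta$ of $u$; hence $|u-u^\delta|\le\delta$. If $u$ is within $3\delta$ of $\partial\Omega$ (in particular if $u\in\partial\Omega$), then since $\mathrm{dist}_H(\partial\Omega^\delta,\partial\Omega)=O(\delta)$ there is a boundary hexagon of $\Omega^\delta$ within $O(\delta)$ of $u$, whose side midpoints are therefore within $O(\delta)$ of $u$; minimality of $u^\delta$ then gives $|u-u^\delta|\le C_\Omega\delta$. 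Taking the larger of the two constants settles (1), and the same argument applies to $w^\delta,c^\delta$.

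For (2): join $u$ to $w$ inside $\overline\Omega$ by a path of length $\le C|u-w|$ — possible with $C$ depending only on $\Omega$ because a smooth Jordan domain is quasiconvex (its boundary being a quasicircle, or more elementarily because of the uniform reach $\rho_0$). Push this path $O(\delta)$ into the interior using the collar so that it avoids the outermost hexagons, then replace it by the nearby sequence of hexagons of $\Omega^\delta$ it passes through; consecutive hexagons in the list share a side, and the total number of them is $O((|u-w|+\delta)/\delta)$. Walking along half-sides through these hexagons (from $u^\delta$, which by (1) is within $O(\delta)$ of $u$, to $w^\delta$) produces a broken line $u^\delta w^\delta$; a priori it need not be simple, but one can excise loops to make it simple without increasing its diameter, and its diameter is at most $C_\Omega(|u-w|+\delta)$ since it stays within $O(\delta)$ of the chosen path. (Note this genuinely uses that the path stays in $\overline\Omega$ rather than that $|u-w|$ is small, which is why $\mathrm{diam}$ and not $|u-w|$ appears in Lemma~\ref{l-uniform-holderness}; cf.\ the remark there.)

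For (3): the inradius of $(\overline\Omega,a,b,c)$ is some fixed number $2r_0>0$ (it is positive because $a,b,c$ are pairwise distinct closed points and $\overline\Omega$ is fixed). I claim the inradius of $(\Omega^\delta,a^\delta,b^\delta,c^\delta)$ is $\ge r_0$ once $\delta$ is small. Indeed, suppose some disk $D$ of radius $<r_0$ disconnected two of $a^\delta,b^\delta,c^\delta$ in $\overline{\Omega^\delta}$; I would enlarge $D$ to the concentric disk $D'$ of radius $r_0$. Using the collar of width $O(\delta)$ and $\mathrm{dist}_H(\partial\Omega^\delta,\partial\Omega)=O(\delta)$, a separation of two marked points in $\overline{\Omega^\delta}$ by $D$ forces a separation of the corresponding two points among $a,b,c$ in $\overline\Omega$ by $D'$ (the parts of $\overline\Omega$ not reached by $\overline{\Omega^\delta}$ are within $O(\delta)\ll r_0$ of $\partial\Omega$, hence cannot reconnect the pieces outside $D'$), contradicting that the inradius of $(\overline\Omega,a,b,c)$ exceeds $r_0$. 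Hence $R_{\Omega,a,b,c}:=r_0$ works for $\delta<\delta_0$, and for $\delta\ge\delta_0$ only finitely many meshes occur, each with positive inradius, so shrinking $R_{\Omega,a,b,c}$ if necessary handles those too.

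\textbf{Main obstacle.} The only genuinely delicate point is promoting the two soft inputs — one macroscopic component, and ``$\partial\Omega^\delta$ hugs $\partial\Omega$ with a uniform collar'' — to honest uniform statements (uniform in $\delta$ and, for (2)–(3), in $u,w$). This is where the hypothesis that $\partial\Omega$ is a $C^1$ curve with non-vanishing derivative is used, via its positive reach; without such regularity the collar width need not be uniform. Everything after that is bookkeeping with hexagons. One should also be a little careful in (2) to produce a genuinely \emph{simple} broken line, but loop-erasure on the hexagon sequence handles this with no change to the diameter bound.
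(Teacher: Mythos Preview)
Your plan is correct and complete in outline; the difference from the paper's proof is one of packaging rather than substance. The paper avoids abstract notions like positive reach or quasiconvexity and instead writes $\partial\Omega$ locally as a graph $y=h(x)$ with $|h'|<1/100$ over a finite cover by squares $\Pi_j$; this makes ``the union of hexagons in $\Omega$ is connected'' and ``$|u-u^\delta|<3\delta$'' visible by hand inside each chart. For (2) the paper does not use global quasiconvexity at all: it simply observes that if $|u-w|\ge\varepsilon$ the bound is vacuous (since $C_\Omega\varepsilon=\mathrm{diam}\,\Omega$), and otherwise $u,w$ lie in a single chart $\Pi_j$ where the straight segment---possibly lifted vertically by $|u^\delta-w^\delta|$---already sits in $\Omega^\delta$. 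Your quasiconvex-path-plus-collar route is a legitimate alternative and arguably more conceptual, but less elementary. For (3) the paper argues constructively, \emph{using the already-proved (1) and (2)}: it picks curves $ab,bc,ca\subset\overline\Omega$, chops them into arcs of diameter $\le\delta$, and applies (1)--(2) piecewise to get broken lines $3C_\Omega\delta$-close to the originals; the inradius bound then drops out directly. Your contradiction argument is the contrapositive of the same mechanism, but the sentence ``the parts of $\overline\Omega$ not reached by $\overline{\Omega^\delta}$ \dots\ cannot reconnect the pieces'' is where you are implicitly invoking exactly that approximation---to make it rigorous you would take a path in $\overline\Omega\setminus D'$ and approximate it by a broken line via (2), which then avoids $D$. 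That is not a gap, but it would be cleaner to say so explicitly.
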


\begin{remark*}
The ratio $|u-u^\delta|/\delta$ can be arbitrarily large for a triangle $\overline{\Omega}$ with a small angle at its vertex~$u$. Using this observation, one can construct a Jordan domain such that $|u-u^\delta|/\delta$ is unbounded.
\end{remark*}

\begin{lemma}[{Solution of the resulting boundary-value problem}] \label{l-mapping} Let $\Omega$ be a domain bounded by a closed smooth curve. Then the right-hand side of~\eqref{eq-limit-function} (continuously extended to all $u,v\in \overline{\Omega}$ except for $v=a,b$) is the unique function in $\overline{\Omega}\times (\overline{\Omega}-\{a,b\})$ that is:
\begin{enumerate}
  \item conjugate-antisymmetric in $(\overline{\Omega}-\{a,b\})\times (\overline{\Omega}-\{a,b\})$;
  \item analytic in $\Omega\times \{v\}$ and continuous in $\overline{\Omega}\times \{v\}$ for each $v\in \overline{\Omega}-\{a,b\}$; and
  \item satisfies boundary conditions~\eqref{eq-boundary-values} (with $a,b,u,v$ understood as points of $\partial \Omega$ rather than $\partial \Omega^\delta$).
\end{enumerate}
\end{lemma}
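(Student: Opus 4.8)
The statement has two halves, and the plan is to establish them separately: that the function
$$F_0(u,v):=g\!\left(\frac{\psi(u)+\overline{\psi(v)}}{\psi(u)-\overline{\psi(v)}}\right)$$
(continuously extended as in the hypothesis) has properties (1)--(3), and that (1)--(3) determine a function uniquely; together these give the lemma. For the first half, write $M(w,w'):=\frac{w+\overline{w'}}{w-\overline{w'}}$ and $z:=M(\psi(u),\psi(v))$, so $F_0=g\circ M\circ(\psi,\psi)$. The computational backbone is the Möbius identity $\frac{z+1}{z-1}=\psi(u)/\overline{\psi(v)}$. Since $\psi$ maps $\overline{\Omega}$ onto the closed upper half-plane $\mathbb{H}=\{\operatorname{Im}z>0\}$ with $a\mapsto0$, $b\mapsto\infty$, one checks that $\psi(u)/\overline{\psi(v)}$ avoids $[0,+\infty]$ whenever $u\in\Omega$ and $v\neq a,b$, so $z$ lies in the slit plane $\mathbb{C}\setminus\bigl((-\infty,-1]\cup[1,+\infty)\bigr)$, the domain of $g$; hence $F_0(\cdot,v)$ is analytic in $\Omega$. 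Property (1) follows from $g$ being odd with real Taylor coefficients (Proposition~\ref{l-schwarz}, or directly from its defining properties) together with the identity $\overline{M(\psi(v),\psi(u))}=-M(\psi(u),\psi(v))$, which gives $\overline{F_0(v,u)}=g(-z)=-g(z)=-F_0(u,v)$. Continuity of $F_0(\cdot,v)$ up to $\partial\Omega$ (the rest of (2)) comes from the continuous extensions of $\psi$ to $\overline\Omega$ and of $g$ to the slit tips $\pm1$ and to each lip of each slit, noting that as $u\to\partial\Omega$ the point $z$ approaches the slit from a fixed side. The boundary conditions (3) are a finite case check: $\psi$ sends the counterclockwise arc $ab$ of $\partial\Omega$ to $(0,+\infty)$ and the clockwise arc to $(-\infty,0)$, so for $u,v\in\partial\Omega$ the number $z$ is real, and running through the six cyclic orders of $a,b,u,v$ one reads off from the signs and moduli of $\psi(u),\psi(v)$ which of $(1,+\infty)$, $(-\infty,-1)$, $(-1,1)$ contains $z$ and from which side it is approached; feeding this into the boundary correspondence of $g$ (upper lips of the two slits $\mapsto$ the sides $[1,i\sqrt3]$ and $[i\sqrt3,-1]$; lower lips $\mapsto$ $[-1,-i\sqrt3]$ and $[-i\sqrt3,1]$; the interval $(-1,1)\mapsto(-1,1)$) reproduces~\eqref{eq-boundary-values} exactly, including the matching of the ``extension from the lower/upper half-plane'' convention of Theorem~\ref{th-main}.

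For uniqueness, let $F$ be any function with (1)--(3); I must show $F=F_0$. The heart is the case $v\in\partial\Omega\setminus\{a,b\}$, say $v$ on the counterclockwise arc $ab$ (the other arc is symmetric, with the roles of $\pm i\sqrt3$ swapped). The points $a,v,b$ cut $\partial\Omega$ into three arcs; intersecting the adjacent segments of~\eqref{eq-boundary-values} and using continuity of $F(\cdot,v)$ pins the corner values $F(a,v)=-1$, $F(v,v)=-i\sqrt3$, $F(b,v)=+1$, and forces $F(\cdot,v)$ to carry the three arcs \emph{onto} the segments $[-1,-i\sqrt3]$, $[-i\sqrt3,1]$, $[1,-1]$. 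Thus $F(\cdot,v)$ maps $\partial\Omega$ onto the boundary of the triangle $\Delta:=\operatorname{conv}\{-1,-i\sqrt3,1\}$ (half of the rhombus) with winding number $1$ about each interior point of $\Delta$ — the total turning being computed arc by arc, each arc contributing the angle subtended at an interior point by one (straight) side, the three contributions summing to $2\pi$ because the path runs once around $\Delta$ in the positive sense. Since $\Delta$ is convex, the maximum principle gives $F(\cdot,v)(\Omega)\subseteq\Delta$, and the argument principle then shows $F(\cdot,v)$ attains every interior value of $\Delta$ exactly once; hence $F(\cdot,v)\colon\Omega\to\operatorname{int}\Delta$ is a conformal homeomorphism extending continuously with $a,v,b\mapsto-1,-i\sqrt3,1$. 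Such a map is unique (Riemann mapping plus three boundary points), and since $F_0(\cdot,v)$ is visibly this same conformal map, $F(\cdot,v)=F_0(\cdot,v)$ for all $v\in\partial\Omega\setminus\{a,b\}$. Swapping the two variables and invoking conjugate-antisymmetry (1) — together with the already-proved conjugate-antisymmetry of $F_0$ — extends this to $F(u,v)=F_0(u,v)$ for all $u\in\partial\Omega\setminus\{a,b\}$. Finally, for $v$ in the interior, $F(\cdot,v)-F_0(\cdot,v)$ is analytic in $\Omega$, continuous on $\overline\Omega$, and vanishes on the dense subset $\partial\Omega\setminus\{a,b\}$ of $\partial\Omega$, hence on all of $\partial\Omega$, so it vanishes identically by the maximum principle. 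This proves $F=F_0$ everywhere.

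The step I expect to be the main obstacle is the argument-principle passage in the uniqueness part: a priori the boundary values of $F(\cdot,v)$ are known only to lie on $\partial\Delta$, and one must genuinely rule out that $F(\cdot,v)$ is degenerate or covers $\operatorname{int}\Delta$ several times — i.e.\ one must verify that the three arc-images wind around $\Delta$ exactly once, hitting its vertices in the correct cyclic order. This rests entirely on first pinning $F$ down at the three corner values $F(a,v),F(v,v),F(b,v)$, which is exactly where the rigidity of~\eqref{eq-boundary-values} (adjacent segments meeting only at vertices of the rhombus) is used. A secondary, purely bookkeeping nuisance is getting the boundary correspondence of the unconventional map $g$ exactly right in the existence part, so that each lip of each slit is matched with the correct side of the rhombus and the half-plane-extension conventions line up with~\eqref{eq-boundary-values}.
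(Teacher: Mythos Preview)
Your proof is correct and follows essentially the same architecture as the paper's: first verify that $F_0=g\!\left(\tfrac{\psi(u)+\overline{\psi(v)}}{\psi(u)-\overline{\psi(v)}}\right)$ has properties (1)--(3) using the symmetries $g(-z)=-g(z)$, $g(\bar z)=\overline{g(z)}$ and the identity $\overline{M(\psi(v),\psi(u))}=-M(\psi(u),\psi(v))$; then for uniqueness treat $v\in\partial\Omega$ first via the argument principle, transfer to $u\in\partial\Omega$ by conjugate antisymmetry, and finish for interior $v$ by the maximum principle. The paper's write-up is terser (it simply invokes ``the argument principle'' in Case~1 and ``boundary values determine an analytic function'' in Case~4), while you spell out the winding-number computation via subtended angles and the intermediate pinning of the corner values $F(a,v)=-1$, $F(v,v)=\mp i\sqrt3$, $F(b,v)=+1$; but the underlying argument is the same.
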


\begin{remark*} The lemma is easily generalized to an arbitrary bounded simply-connected domain, if $\partial \Omega$ is understood as the set of prime ends.
\end{remark*}

All the assertions stated above are proved in the next section, except for Proposition~\ref{l-schwarz} and the latter three technical lemmas proved in the appendix. The proof of Lemma~\ref{l-uniform-holderness} is completely analogous to \cite[Lemma~10]{KS-20}; the other proofs in the appendix are obtained by well-known methods.

\section{Proofs} \label{sec-proofs}

\begin{proof}[Proof of Lemma~\ref{l-symmetric-difference}]
Fix distinct midpoints $a$, $b$, $u$, $v$. Let us construct a bijection between the loop configurations containing broken lines $uv$ and $ab$, the former to the left from the latter, and the colorings of hexagons such that $u$ and $v$ can be joined by a broken line $\gamma$ lying to the left from the interface. For that purpose, for each possible interface of such colorings, fix such a broken line $\gamma$, so that $\gamma$ depends only on the interface but not a particular coloring.

Take a loop configuration containing broken lines $uv$, $ab$ and let us construct a coloring with the interface $ab$. Perform the symmetric difference of the loop configuration and the fixed broken line $\gamma$ (and take the topological closure). We get a disjoint union of closed broken lines and just one non-closed broken line $ab$. The desired coloring is then determined by the following two conditions:
\begin{enumerate}
  \item  two hexagons with a common side have different colors if and only if the side is contained in the resulting union;
  \item the hexagon containing the midpoint $a$ is blue, if and only if the half-side of $ab$ starting at $a$ goes counterclockwise around the boundary of the hexagon.
\end{enumerate}
Clearly, this gives the desired bijection. See Figure~\ref{fig:Dobrushin}.

A similar bijection shows that the total number of loop configurations equals the total number of colorings \cite[\S1.2]{KS-20}. Thus $P(u\cdots v, a\rightarrow b)=\ploop(u\leftrightarrow v,a\rightarrow b)$. Analogously, $P(a\rightarrow b,u\cdots v)=\ploop(a\rightarrow b,u\leftrightarrow v)$.
\end{proof}

\begin{proof}[Proof of Lemma~\ref{l-conjugate-symmetry}]
This is a direct consequence of the definition because $\ploop(u\leftrightarrow v,a\rightarrow b)=\ploop(v\leftrightarrow u,a\rightarrow b)$ and $\ploop(a\rightarrow b,u\leftrightarrow v)=\ploop(a\rightarrow b,v\leftrightarrow u)$.
\end{proof}

\begin{proof}[Proof of Lemma~\ref{l-discrete-analiticity}]
(Cf.~\cite[Proof of Lemma~4]{KS-20})
Rewrite~\eqref{eq-l-discrete-analiticity} in the form
\begin{equation}\label{eq-p-l-discrete-analiticity}
  \sum_{u\in\{p,q,r\}}(u-z)\left[
   \#(a\leftrightarrow v,b\leftrightarrow u)
  -\#(a\leftrightarrow u,b\leftrightarrow v)  +i\sqrt{3}\#(u\leftrightarrow v,a\rightarrow b) -i\sqrt{3}\#(a\rightarrow b,u\leftrightarrow v)
  \right]=0,
\end{equation}
where $\#(a\leftrightarrow u,b\leftrightarrow v)$ denotes the number of loop configurations containing broken lines $au$ and $bv$ etc.

We group loop configurations with $u=p,q,r$ respectively in triples such that any two loop configurations in a same triple differ by two half-sides adjacent to $z$. See Figure~\ref{fig:mano}. Each triple contributes zero to the left-hand side of~\eqref{eq-p-l-discrete-analiticity}. Indeed, if the loop configurations in a triple belong to the same link pattern (as in Figure~\ref{fig:mano} to the top and to the middle), then the contributions of the loop configurations to~\eqref{eq-p-l-discrete-analiticity} are proportional to $p-z$, $q-z$, $r-z$, hence sum up to zero.
If the loop configurations in a triple do not belong to the same link pattern (as in Figure~\ref{fig:mano} to the bottom), then  up to a cyclic permutation of $p,q,r$ and a permutation of $a,b$, they contain the broken lines $aq$, $br$, and $vp$ respectively, the latter lying to the left from $ab$. Up to overall minus sign, such loop configurations contribute  $(p-z)i\sqrt{3}$, $-(q-z)$, $(r-z)$ respectively to~\eqref{eq-p-l-discrete-analiticity}, again summing up to zero. This proves~\eqref{eq-p-l-discrete-analiticity}, and hence~\eqref{eq-l-discrete-analiticity}.
\end{proof}

\begin{figure}[ht!]
    \centering
    \includegraphics[width=0.80\textwidth]{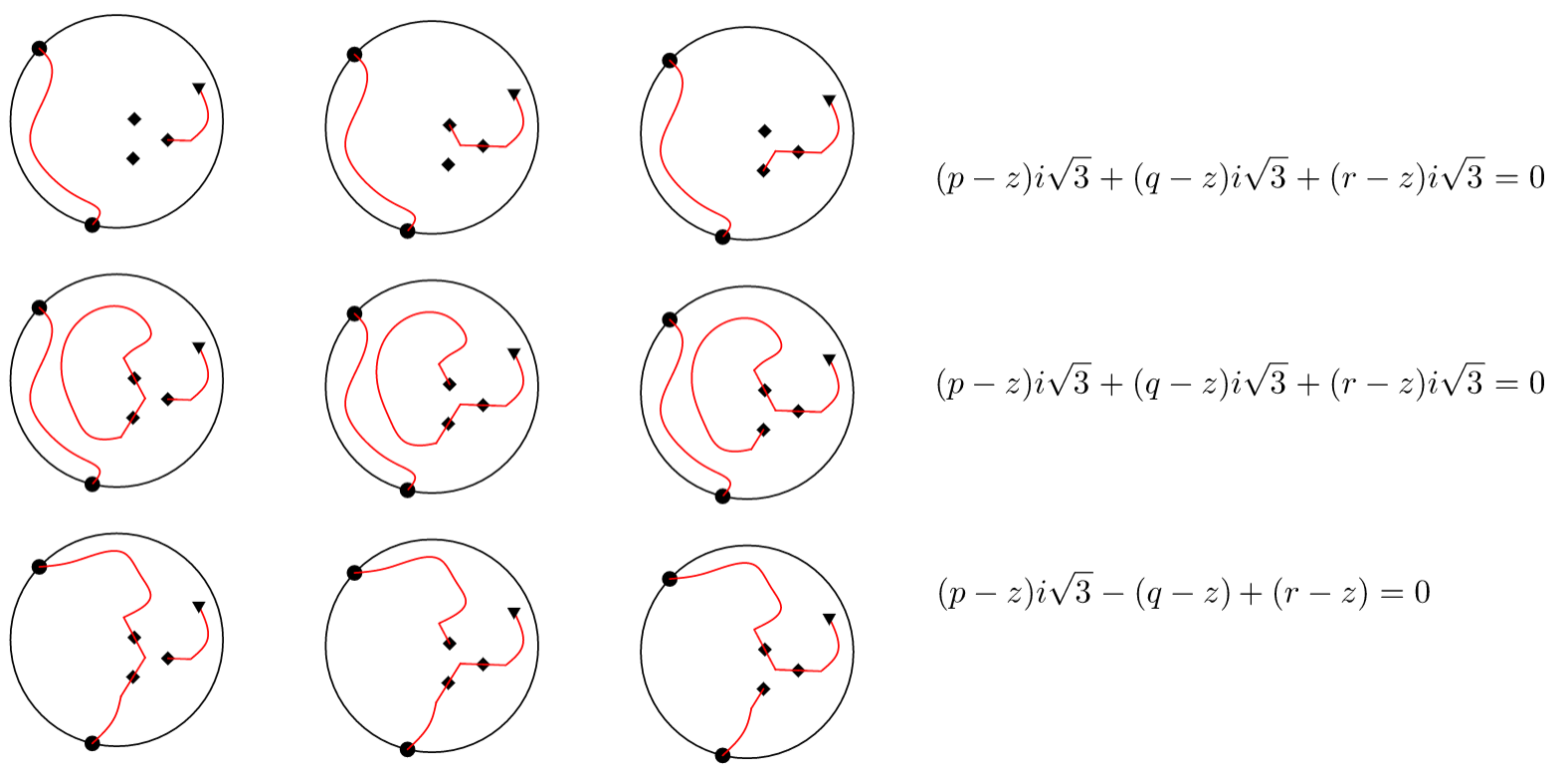}
    \caption{(Cf.~\cite[Fig.~2]{KS-20}) Graphical proof of Lemma~\ref{l-discrete-analiticity}.
    Midpoints  $p,q,r$ are marked with diamonds, $a,b$ with circles, and $v$ with triangles. Configurations are grouped horizontally.}
    \label{fig:mano}
\end{figure}

\begin{proof}[Proof of Corollary~\ref{cor-Cauchy}]
For a counterclockwise triangular contour $\gamma=w_0w_1w_2$, the corollary
follows from~\eqref{eq-l-discrete-analiticity} because $w_{j+1}-w_j= 2\sqrt{3}i({p_j}-z)$, where $z$ is the vertex enclosed by the contour.
Since any closed broken line can be tiled by triangular contours and the discrete integration is additive with respect to contour, the corollary follows.
\end{proof}

\begin{proof}[Proof of Lemma~\ref{l-boundary-values}]
Since each loop configuration belongs to exactly one of the four link patterns, it follows that
\begin{equation*}
  \ploop(a\leftrightarrow u,b\leftrightarrow v)+\ploop(a\leftrightarrow v,b\leftrightarrow u) +\ploop(u\leftrightarrow v,a\rightarrow b) +\ploop(a\rightarrow b,u\leftrightarrow v)=1.
\end{equation*}
If the order of the points is $a,b,u,v$, then no loop configurations containing disjoint broken lines $au$ and $bv$ are possible. Neither loop configurations containing a broken line $uv$ to the right from $ab$ are possible. Hence $\ploop(a\leftrightarrow u,b\leftrightarrow v)=\ploop(a\rightarrow b,u\leftrightarrow v)=0$ in this case and thus
\begin{equation*}
F(u,v)=\ploop(a\leftrightarrow v,b\leftrightarrow u)
+i\sqrt{3}(1-\ploop(a\leftrightarrow v,b\leftrightarrow u))\in [1;i\sqrt{3}].
\end{equation*}
The other orders are considered analogously.
\end{proof}

\begin{proof}[Proof of Theorem~\ref{prop-convergence}]
Step 1: piecewise linear extension of $F$. Fix $a^\delta,b^\delta,v^\delta$ and consider the function $F^\delta(u^\delta):=F(u^\delta,v^\delta)$ on the set of all midpoints $u^\delta\ne a^\delta,b^\delta$. Set $F^\delta(a^\delta):=-1$ and $F^\delta(b^\delta):=+1$. Extend the function to the centers, vertices and side midpoints of all the hexagons intersecting $\Omega$ (not just contained in $\Omega^\delta$) by the formula $F^\delta(u):=F^\delta(u^\delta)$, i.e., set
the value at a given point $u$ to be the same as at an arbitrary closest midpoint $u^\delta$. Then extend the function linearly to each triangle spanned by adjacent vertex, side midpoint, and center of a hexagon. Finally, restrict the function to $\overline{\Omega}$. We get a continuous piecewise-linear function $F^{\delta}\colon \overline{\Omega}\to\mathbb{C}$.

Step 2: extraction of a converging subsequence ${F^{\delta_n}}$. By the definition, $|F^\delta(u)|\le 2+2\sqrt{3}$, hence $F^\delta$ is uniformly bounded.

Since $\partial\Omega$ is smooth, by Lemmas~\ref{l-uniform-holderness}--\ref{l-lattice-approximation}
$F^\delta$ is uniformly H\"older in $\overline{\Omega}$. Indeed, the lemmas imply
$$
|F^\delta(u)-F^\delta(w)|\le C\left(\frac{C_\Omega(|u-w|+4\delta+\delta)}{R_{\Omega,a,b,v}}\right)^\eta
$$
for all $u,w\in \overline{\Omega}$. This gives the H\"older condition for $|u-w|\ge\delta$. Applying this inequality for a triangle spanned by adjacent vertex, side midpoint, and hexagon center, we get $|\nabla F^\delta| \le C_{\Omega,a,b,v}\delta^{\eta-1}$ inside the triangle for some $C_{\Omega,a,b,v}$ not depending on $\delta$. This implies the H\"older condition for $|u-w|<\delta$.

Then by the Arzel\`a--Ascoli theorem, there is a continuous function $f \colon  \overline{\Omega}\to \mathbb C$ and a subsequence  $\delta_n\searrow 0$ such that ${F^{\delta_n}}\rightrightarrows f$ uniformly in $\overline{\Omega}$. Hence,
\begin{equation}\label{eq-uniform-convergence}
F^{\delta_n}(u^{\delta_n})\rightrightarrows f(u)\qquad\text{as }\quad {n\to\infty}.
\end{equation}

Step 3: analyticity of the limit $f$. Take an arbitrary triangular contour $\gamma\subset \Omega$ such that $v$ is outside of~$\gamma$. Let $\gamma^\delta$ be the closed broken line with the vertices at the centers of the hexagons of $\Omega^\delta$ of maximal enclosed area contained inside $\gamma$. Then $v^\delta$ is outside $\gamma^\delta$ for sufficiently small $\delta$. Approximating an integral by a sum, applying~\eqref{eq-uniform-convergence} and Corollary~\ref{cor-Cauchy}, we get
$$\int\limits_\gamma f(z) \, dz =
\int\limits_{{\gamma^{\delta_n}}}^\# f(z) \, d^{\#}z  + o(1) =
\int\limits_{{\gamma^{\delta_n}}}^\# {F^{\delta_n}}(z) \, d^{\#}z  + o(1) =
o(1)\qquad\text{as }\quad {n\to\infty}.
$$
Thus $\int_\gamma f(z) \, dz = 0$, and by Morera's theorem $f$ is analytic in $\Omega-\{v\}$. Since $f$ is continuous in the whole $\Omega$, by the removable singularity theorem it follows that $f$ is analytic in the whole $\Omega$.

Step 4: boundary values of the limit $f$. Let us show that the function $f\colon  \overline\Omega\to \mathbb C$ satisfies boundary conditions~\eqref{eq-boundary-values} (with $a,b,u,v$ understood as points of $\partial \Omega$ rather than midpoints). Indeed, if the order of the points on $\partial \Omega$ is, say, $a,b,u,v$, then the order of $a^\delta,b^\delta,u^\delta,v^\delta$ on $\partial \Omega^\delta$ is the same for sufficiently small $\delta$. By Lemma~\ref{l-boundary-values} we have $F(u^{\delta_n},v^{\delta_n})\in [1;i\sqrt{3}]$. By~\eqref{eq-uniform-convergence},
$f(u)\in [1;i\sqrt{3}]$ as well.

Step 5: identification of the limit $f$. Recall that the function $f(u)$ depends on the parameter $v$ as well, and write $f(u,v):=f(u)$. By Lemma~\ref{l-conjugate-symmetry} and~\eqref{eq-uniform-convergence} it follows that the function $f(u,v)$ is conjugate-antisymmetric for $u,v\notin \{a,b\}$. Then by Lemma~\ref{l-mapping} the function $f(u,v)$ coincides with the right-hand side of~\eqref{eq-limit-function}. Thus the limit function $f(u)$ is uniquely determined by $a,b,v$, and thus does not depend on the choice of the converging subsequence $F^{\delta_n}$. Hence convergence~\eqref{eq-uniform-convergence} holds for the initial sequence $F^\delta$, not just a subsequence. We have arrived at~\eqref{eq-limit-function}.
\end{proof}

\begin{remark*}
Step~2 is the only one where the smoothness of the boundary is essentially used. For more general domains, this step is more technical; see \cite[{\S5}]{KS-20}.
\end{remark*}

\begin{proof}[Proof of Theorem~\ref{th-main}]
This follows directly from Lemma~\ref{l-symmetric-difference} and Theorem~\ref{prop-convergence}.
\end{proof}

Let us show that Cardy's and Schramm's formulae are indeed particular cases of this result.

\begin{proof}[Proof of Corollary~\ref{cor-Cardy}]
Apply Theorem~\ref{th-main}. Since the counterclockwise order of the marked points on $\partial \Omega$ is $a,b,v,u$, it follows that $\psi(u)/\overline{\psi(v)}\in [0;1]$ and $P(a^\delta \rightarrow b^\delta,u^\delta\cdots v^\delta)=0$ for sufficiently small $\delta$. Clearly, $P(a^\delta u^\delta\leftrightarrow b^\delta v^\delta)=P(u^\delta \cdots v^\delta,a^\delta \rightarrow b^\delta)$. Hence the crossing probability tends to the right-hand side of~\eqref{eq-th-main} as $\delta\searrow 0$.

It remains to prove that the right-hand sides of~\eqref{eq-th-main} and~\eqref{eq-cor-Cardy} are equal for $\psi(u)/\overline{\psi(v)}\in[0;1]$, i.e. $\frac {3\Gamma(2/3)}{\Gamma(1/3)^2} \eta^{1/3}\cdot
{}_2F_1 \left(\frac 13, \frac 23; \frac 43; \eta \right)=\frac{1}{\sqrt{3}}\mathrm{Im}\,g\left(\frac{\eta+1}{\eta-1}\right)$
for each $\eta\in[0;1]$.
Here the left-hand side is the Schwarz triangle function that conformally maps the upper half-plane $\mathrm{Im}\,\eta>0$ onto the equilateral triangle with the vertices $0,1,(1+\sqrt{3} i)/2$ and takes  $0,1,\infty$ to the respective vertices \cite[Ch.~VI, \S5]{Nehari-75}. By the definition and a symmetry argument, $\frac{1}{2}g\left(\frac{\eta+1}{\eta-1}\right)+\frac{1}{2}$ is the conformal mapping of the lower half-plane $\mathrm{Im}\,\eta<0$ onto the same triangle, with the images of $1$ and $\infty$ interchanged.
Since a conformal mapping onto a domain is determined by the images of three boundary points, it follows that the images of each $\eta\in[0;1]$ under the two conformal mappings are symmetric with respect to the bisector of the angle with the vertex $0$. Since for each $z\in [0;1]$ the imaginary part of the symmetric point equals $\frac{\sqrt{3}}{2}z$, it follows that the right-hand sides of~\eqref{eq-th-main} and~\eqref{eq-cor-Cardy} are equal.
\end{proof}

\begin{remark*} Corollary~\ref{cor-Cardy} can also be (more directly) deduced from Theorem~\ref{prop-convergence} applied for the case when the counterclockwise order of the marked points on $\partial\Omega$ is $a,u,b,v$.
\end{remark*}

\begin{proof}[Proof of Corollary~\ref{cor-Schramm}]
Apply Theorem~\ref{th-main} for $\Omega=D^2$, $u=v=0$, and $u^\delta=v^\delta$. We have
$$
P(0,a^\delta \rightarrow b^\delta)-P(u^\delta \cdots u^\delta,a^\delta \rightarrow b^\delta)\to 0\quad\text{ and }\quad
P(u^\delta \cdots u^\delta,a^\delta \rightarrow b^\delta)+P(a^\delta \rightarrow b^\delta,u^\delta\cdots u^\delta)\to 1
$$
as $\delta\searrow 0$ because the probability that the interface $a^\delta b^\delta$ intersects the $\delta$-neighborhood of the origin tends to $0$. (The latter well-known fact is actually reproved in the proof of Lemma~\ref{l-uniform-holderness} in the appendix.)
Thus by~\eqref{eq-th-main} we get
$$
P(0,a^\delta \rightarrow b^\delta)\to \frac{1}{2}+\frac{1}{2\sqrt{3}}
\mathrm{Im}\,g\left(
\frac{\psi(0)+\overline{\psi(0)}}{\psi(0)-\overline{\psi(0)}}\right)
\qquad\text{as }\delta\searrow 0.
$$
Here
$$
\frac{\psi(0)+\overline{\psi(0)}}{\psi(0)-\overline{\psi(0)}}
= \frac{a+b}{a-b}=-i\cot\frac{\theta}{2},
$$
because the value $a/b$ is the cross-ratio of the points $a,b,0,\infty$, the value $\psi(0)/\overline{\psi(0)}=\psi(0)/\psi(\infty)$ is the cross-ratio of their $\psi$-images, and the linear-fractional mapping $\psi\colon\mathrm{Int}\,D^2\to\{z:\mathrm{Im}z>0\}$ extended to $\mathbb{C}\cup\{\infty\}$ preserves cross-ratios. Together with Proposition~\ref{l-schwarz}, this gives~\eqref{eq-cor-Schramm}.
\end{proof}

\subsection*{Acknowledgements}

The work is supported by the Swiss NSF, ERC Advanced Grants 340340 and 741487,
National Center of Competence in Research (NCCR SwissMAP), and Theoretical Physics and Mathematics Advancements Foundation ``Basis'' grant 21-7-2-19-1. Section~3 was written entirely under the support of Russian Science Foundation grant 19-71-30002. Appendix~A was written entirely under the support of the Ministry of Science and Higher Education of the Russian Federation (agreement no. 075-15-2022-287).

The authors are grateful to I.~Benjamini, O.~Feldheim, G.~Kozma, I.~Novikov for useful discussions.

\appendix

\section{Proofs of technical lemmas} \label{sec-technical}

\begin{proof}[Proof of Lemma~\ref{l-uniform-holderness}]
The lemma follows from the sequence of inequalities to be explained below:
\begin{multline*}
\frac{|F(u,c)-F(w,c)|}{2\sqrt{3}}
\le  \ploop(a,b,c\leftrightarrow uw)\\
\le  \min\{\ploop(a,c\leftrightarrow uw), \ploop(b,c\leftrightarrow uw), \ploop(a,b\leftrightarrow uw)\}
\le  P(\partial B_R^\delta(u)\leftrightarrow \partial B_r^\delta(u))
\le  C_{\mathrm{RSW}}\left(\frac{r}{R}\right)^\eta.
\end{multline*}
Here we use the following notation:
\begin{itemize}
\item $\ploop(a,b,c\leftrightarrow uw)$ is the fraction of loop configurations with disorders at $a,b,c,u$ such that both non-closed broken lines have common points with $uw$. Those loop configurations are called \emph{tripod} in what follows.
\item $\ploop(a,c\leftrightarrow uw)$ is the fraction of loop configurations with disorders at $a,c$ such that the only non-closed broken line has common points with $uw$. Those loop configurations are called \emph{bipod} in what follows. {\emph{Loop configurations with disorders at $a,c$} are defined analogously to the ones with disorders at $a,b,c,u$.}
\item $B_R^\delta(u)$ and $B_r^\delta(u)$ are the lattice approximations of the disks centered at $u$ of radii $R$ and $r:=3\mathrm{diam}\, uw$ respectively (the factor of $3$ guarantees that $B_r^\delta(u)\ne\emptyset$ for $u\ne w$).
\item $P(\partial B_R^\delta(u)\leftrightarrow \partial B_r^\delta(u))$ is the probability that there is a \emph{crossing between $\partial B_R^\delta(u)$ and $\partial B_r^\delta(u)$} in a coloring of $\Omega^\delta$, i.e. a connected component of the union of the hexagons of the same color having common points with both $\partial B_R^\delta(u)$ and $\partial B_r^\delta(u)$.
\item $C_{\mathrm{RSW}}$ and $\eta$ are the constants  from the Russo--Seymour--Welsh inequality \cite[Proposition~8]{KS-20}.
\end{itemize}
We assume that $R-10\delta>r>0$, otherwise there is nothing to prove.

Step 1: the first inequality. The symmetric difference with the broken line $uw$ does not change the link pattern of a loop configuration unless both non-closed broken lines in the configuration have common points with $uw$. Thus each pair of non-tripod loop configurations with the symmetric difference $uw$ contributes zero to $F(u,c)-F(w,c)$. Each tripod loop configuration contributes at most $2\sqrt{3}$ times the probability of the configuration, which gives the first inequality.

Step 2: the second inequality. Let us prove that, say, $\ploop(a,b,c\leftrightarrow uw)
\le  \ploop(a,c\leftrightarrow uw)$. In what follows we are not going to use that $a,b\in\partial \Omega^\delta$, hence the same argument will prove that $\ploop(a,b,c\leftrightarrow uw)
\le  \ploop(b,c\leftrightarrow uw)$ and $\ploop(a,b,c\leftrightarrow uw)
\le  \ploop(a,b\leftrightarrow uw)$.

It suffices to construct an injection from the set of tripod loop configurations to
bipod ones.

Take a tripod loop configuration. Denote by $ax, by, cz$ the connected parts of the broken lines from $a,b,c$ respectively to the first intersection point with $uw$. Fix a broken line $bu$ (not necessary from the loop configuration) having no common points with neither $ax$ nor $cz$ (possibly except the endpoints $x$ and $z$). Such a broken line exists, e.g., the union of $by$ and the part of $uw$ from $y$ to $u$ will work. But we fix $bu$ so that it depends only on $ax$ and $cz$ but not on the tripod loop configuration itself. Then the symmetric difference with $bu$ gives the required injection, implying the second inequality.

Step 3: the third inequality. Choose two points from $a,b,c$, say, $a$ and $c$, belonging to the same connected component of $\Omega^\delta-\mathrm{Int}\,B_R^\delta(u)$. Such a pair of points exists by the definition of the inradius $R$. Join $a$ and $c$ by a fixed broken line $ac$ outside $\mathrm{Int}\,B_R^\delta(u)$.

It suffices to construct an injection from the set of bipod loop configurations to the set of colorings with a crossing between $\partial B_R^\delta(u)$ and $\partial B_r^\delta(u)$.

Take a bipod loop configuration. Perform its symmetric difference with the fixed broken line $ac$ (and take the closure). We get a disjoint union of closed broken lines. The resulting union determines a coloring of the hexagons (by condition~1 from the proof of Lemma~\ref{l-symmetric-difference} and the requirement that the hexagon containing $a$ is painted blue), which we assign to the initial loop configuration.

At least one of the resulting closed broken lines has common points with both $ac$ and $uw$, hence with both $\partial B_R^\delta(u)$ and $\partial B_r^\delta(u)$, otherwise the symmetric difference of $ac$ and the union of all the resulting closed broken lines would not connect $a$ with $uw$. The hexagons bordering upon that closed broken line from inside have the same color. Thus the constructed coloring has a crossing, which implies the third inequality.

Step 4: the fourth inequality. Notice that removing the hexagons outside the ring between $\partial B_R^\delta(u)$ and $\partial B_r^\delta(u)$ does not affect the probability of a crossing, and adding hexagons inside the ring (not contained in $\Omega^\delta$ initially) can only increase this probability. This way the third inequality reduces to the Russo--Seymour--Welsh inequality for a ring \cite[Proposition~8]{KS-20}.

This concludes the proof {for $u,w\not\in\{a,b,c\}$. Otherwise, the argument is analogous, only some loop configurations contain fewer disorders.}
\end{proof}

\begin{proof}[Proof of Lemma~\ref{l-lattice-approximation}]
For an open square $\Pi$ with side length $s$ and a real number $\varepsilon <s$ denote by $\Pi^{-\varepsilon}$ the square with side length $s-\varepsilon$ having the same center and side directions.
By the implicit function theorem and a compactness argument,
we can choose $\varepsilon>0 $ and open squares $\Pi_1,\dots,\Pi_m$ such that
\begin{enumerate}
\item $\overline{\Omega}$ is covered by  $\cup_{j=1}^m \Pi_j^{-100 \varepsilon}$  (in particular, each $\Pi_j$ has side length at least $100 \varepsilon$);
\item for each $j$ either $\Pi_j\subset\Omega$ or
  the  intersection $\overline{\Omega} \cap \Pi_j$  has a form $\{(x, y) \in\Pi_j : y \ge h(x)\}$ for some function $h(x)$ with $h(0) = 0$ and $|h'(x)| < 1/100$,
  where the coordinate axes pass through the center of $\Pi_j$ and are parallel to the sides.
\end{enumerate}

Take $\delta< \varepsilon$. Then each hexagon of the lattice contained in $\Omega$ is contained in some $\Pi_j$ by condition~1, all the hexagons in $\Omega \cap \Pi_j$ form a connected polygon by condition~2, and the union of such polygons for all $j$ is connected. So, the union of hexagons of the lattice contained in $\Omega$  is connected and hence  coincides with $\Omega^\delta$. Set  $C_\Omega:= \mathrm{diam}\, \Omega / \varepsilon > 100$ and let us prove properties 1--3 stated in the lemma.

Property 1. If $\delta\ge\varepsilon$ then there is nothing to prove. Otherwise take any $u \in\Omega$. Then $u \in\Omega\cap\Pi_j^{-100 \varepsilon}$ for some $j$.
The distance between $u$ and $\Omega^\delta$ is at most $2\delta$ (because
by condition~2 the point $u+(0,2\delta)$ belongs to a hexagon contained in $\Omega\cap\Pi_j$, hence to $\Omega^\delta$). Thus $| u^\delta - u| < 3\delta < C_\Omega \delta$.

Property 2. If $|u-w|\ge \varepsilon$ or $\delta \ge \varepsilon$ then there is nothing to prove. Otherwise $| u^\delta - u|,| w^\delta - w| < 3\delta$ and $u, w, u^\delta, w^\delta\subset \Omega\cap\Pi_j$ for some $j$. It suffices to join
$u^\delta$ and $w^\delta$ by a broken line $u^\delta w^\delta$ of length at most $6|u^\delta-w^\delta|$. If $[u^\delta;w^\delta]$ does not intersect $\partial\Omega^\delta$, then there is a broken line $u^\delta w^\delta$ of length at most $2|u^\delta-w^\delta|$. If $[u^\delta;w^\delta]$ does intersect $\partial\Omega^\delta$, then consider the points $u':=u^\delta+(0,|u^\delta-w^\delta|)$ and $w':=w^\delta+(0,|u^\delta-w^\delta|)$. Then the three segments
$u^\delta u'$, $u'w'$, and $w'w^\delta$ of total length $3|u^\delta-w^\delta|$ are contained in $\Omega^\delta$. Hence $u^\delta$ and $w^\delta$ can be joined by a broken line $u^\delta w^\delta$ of length at most $6|u^\delta-w^\delta|$.

Property 3. Join $a,b,{c}$ by curves $ab,bc,ca\subset\overline{\Omega}$ having no common points besides the endpoints. The domain $(abc,a,b,c)$ enclosed by the curves has a positive inradius $R$. Decompose each curve into arcs of diameter at most $\delta$. For each arc $uw$, the points $u^\delta$ and $w^\delta$ are $C_\Omega\delta$-close to $u$ and $w$ (property~1) and can be joined by a broken line of diameter at most $2C_\Omega\delta$ (property~2). Thus $a^\delta,b^\delta,c^\delta$ can be joined by three broken lines (possibly having common points), $3C_\Omega\delta$-close to $ab,bc,ca$. Hence the inradius of $(\Omega^\delta,a^\delta,b^\delta,c^\delta)$ is greater than $R/2$ for $\delta<R/6C_\Omega$ (and greater than $\delta/4$ for all~$\delta$).
\end{proof}

\begin{proof}[Proof of Lemma~\ref{l-mapping}]
\emph{Existence.} Let us prove the right-hand side of~\eqref{eq-limit-function} has all the properties listed in the lemma. Denote $z:=\frac{\psi(u)+\overline{\psi(v)}}{\psi(u)-\overline{\psi(v)}}$ so that the right-hand side equals $g(z)$.

Property 1: conjugate-antisymmetry. We have $g(z)=-g(-z)=\overline{g(\overline{z})}=-\overline{g(-\overline{z})}$ for all $z$,
because the four conformal mappings have the same domain, the same image, and the same images of $0,\pm 1$. Since interchanging $u$ and $v$ takes $z$ to $-\overline{z}$, the right-hand side of~\eqref{eq-limit-function} is conjugate-antisymmetric.

Property 2: analyticity and continuity. For fixed $v\ne a,b$, the right-hand side of~\eqref{eq-limit-function} is analytic in $\Omega$ and continuous in $\overline{\Omega}$ because
$z\in(-\infty;-1]\cup[1;+\infty)$ only if both
$u,v$ belong to the counterclockwise arc $ab$ of $\partial\Omega$ or both $u,v$ belong to the clockwise arc $ab$ (see the convention right after Theorem~\ref{th-main}).

Property 3: boundary values. If the order of the points on $\partial\Omega$ is $a,b,u,v$, then $z\in[1;+\infty)$. Then by the convention right after Theorem~\ref{th-main}, the value $g(z)$ is understood as continuously extended from the first coordinate quadrant. Since $g(z)=\overline{g(\overline{z})}=-\overline{g(-\overline{z})}$ for all $z$,
it follows that $g$ maps the first quadrant to the triangle with the vertices $0,1,i\sqrt{3}$, maps $[0;1]$ onto itself, and takes the positive imaginary semi-axis $[0;+i\infty]$ to $[0;i\sqrt{3}]$. By the boundary correspondence principle, $g(z)\in [1;i\sqrt{3}]$.
The other orders of $a,b,u,v$ are considered analogously. We arrive at~\eqref{eq-boundary-values}.

\emph{Uniqueness.} Let us prove that each function $f(u,v)$ satisfying the conditions of the lemma, coincides with the right-hand side of~\eqref{eq-limit-function}. Consider the following $4$ cases:

Case 1: $v\in\partial\Omega-\{a,b\}$. For fixed $v$, the function $f(u,v)$ is analytic in $\Omega$, continuous in $\overline{\Omega}$, and takes the arcs $ab$, $bv$, $va$ of $\partial\Omega$ to the segments $[-1;1]$, $[1;\pm i\sqrt{3}]$, $[\pm i\sqrt{3};-1]$ respectively (where the sign depends on the clockwise order of $a$, $b$, $v$ along $\partial\Omega$). By the argument principle, it follows that such function $f(u,v)$ is unique, hence it coincides with the right-hand side of~\eqref{eq-limit-function} for $v\in\partial\Omega-\{a,b\}$.

Case 2: $v\in\Omega$ and $u\in\partial\Omega-\{a,b\}$. By the conjugate antisymmetry, we have $f(u,v)=-\overline{f(v,u)}$. The latter coincides with the right-hand side of~\eqref{eq-limit-function} by Case 1 and by conjugate antisymmetry of the right-hand side of~\eqref{eq-limit-function}.

Case 3: $v\in\Omega$ and $u\in\{a,b\}$. By the continuity, the values $f(u,v)$ for $u\in\partial\Omega-\{a,b\}$ determined in Case 2 uniquely determine the values $f(a,v)$ and $f(b,v)$.

Case 4: $v,u\in\Omega$. For fixed $v$, the function $f(u,v)$ is analytic in $\Omega$ and continuous in $\overline{\Omega}$. Thus the boundary values (determined in Cases 2--3) uniquely determine the function $f(u,v)$. Thus $f(u,v)$ coincides with the right-hand side of~\eqref{eq-limit-function} for all $(u,v)\in \overline{\Omega}\times (\overline{\Omega}-\{a,b\})$.
\end{proof}

\begin{proof}[Proof of Proposition~\ref{l-schwarz}]
We use well-known properties of the Schwarz triangle functions~\cite[Ch.~VI,\S5]{Nehari-75}.

The Schwarz triangle function
$g_0(z):=\frac {3\Gamma(2/3)}{\Gamma(1/3)^2}
z^{1/3}\cdot {}_2F_1 \left(\frac 13, \frac 23, \frac 43, z \right)$
maps the upper half-plane onto the equilateral triangle with the vertices $0,1,(1+\sqrt{3}i)/2$, and takes $0,1,\infty$ to the respective vertices. The composition $2 g_0((z+1)/2)-1$ maps the half-plane
onto the triangle with the vertices $-1,1,i\sqrt{3}$, and takes $-1,1,\infty$ to the respective vertices. It also takes $0$ to $0$ by symmetry because conformal mapping is uniquely determined by the images of three points. The Schwarz reflection principle shows that the same composition maps $\mathbb{C}-[1;+\infty)\cup(-\infty;-1]$ onto the interior of the rhombus.

The Schwarz triangle function $\frac{2\sqrt{3}\Gamma(2/3)}{\sqrt{\pi}\Gamma(1/6)} z^{1/2}\cdot{}_2F_{1}\left(\frac{1}{2},\frac{2}{3};\frac{3}{2};
z \right)$ maps the upper half-plane onto the right triangle with the vertices $0,1,i\sqrt{3}$, and takes $0,1,\infty$ to the respective vertices. The precomposition with the mapping $z\mapsto z^2$ takes the first quadrant $\{z:\mathrm{Re}\,z,\mathrm{Im}\,z>0\}$ to the same triangle. Applying the Schwarz reflection principle, we see that the same composition maps $\mathbb{C}-[1;+\infty)\cup(-\infty;-1]$ onto the interior of the rhombus, and $0,+1,-1$ are fixed.
\end{proof}

\noindent
\textsc{Mikhail Khristoforov\\
Saint Petersburg University}
\\
\texttt{mikhail.khristoforov\,@\,gmail$\cdot $com}

\noindent
\textsc{Mikhail Skopenkov\\
King Abdullah University of Science and Technology \&\\
HSE University (Faculty of Mathematics) \&\\
Institute for Information Transmission Problems of the Russian Academy of Sciences} 
\\
\texttt{mikhail.skopenkov\,@\,gmail$\cdot $com} \quad \url{https://users.mccme.ru/mskopenkov/}

\noindent
\textsc{Stanislav Smirnov\\
Universit\'e de Gen\`eve, Gen\`eve 4, Switzerland \&\\
Saint Petersburg University
{\&\\
Skolkovo Institute of Science and Technology
}
}
\\
\texttt{Stanislav.Smirnov@unige.ch}

\end{document}